\newtheorem*{notation*}{Notation}
\newtheorem*{definition}{Definition}
\newtheorem{theorem}{Theorem}[section]
\newtheorem{lemma}[theorem]{Lemma}
\newtheorem{corollary}[theorem]{Corollary}
\newtheorem{conjecture}[theorem]{Conjecture}
\newtheorem*{remark*}{Remark}
\numberwithin{equation}{section}
\def \a {\alpha}
\def \b {\beta}
\def \R {\mathbb{R}}
\def \N {\mathbb{N}}
\def \L {\mathcal{L}}
\def \D {\Delta}
\tikzstyle{cir} = [draw, circle, minimum height= 20 mm]
\title{Many cliques with few edges and bounded maximum degree}
\author[1]{Debsoumya Chakraborti\thanks{This work was supported in part by the Institute for Basic Science (IBS-R029-C1)}}
\author[2]{Da Qi Chen \thanks{This material is based upon work supported by the Air Force Office of Scientific Research under award number FA9550-20-1-0080}}
\affil[1]{\small Discrete Mathematics Group, Institute for Basic Science (IBS), Daejeon,~South~Korea}
\affil[2]{\small Department of Mathematical Sciences, Carnegie Mellon University, Pittsburgh,~USA}
\affil[ ]{\small Email:
\texttt{debsoumya@ibs.re.kr},
\texttt{daqic@andrew.cmu.edu}}
\begin{document}
\maketitle
\begin{abstract}
Generalized Tur\'an problems have been a central topic of study in extremal combinatorics throughout the last few decades. One such problem, maximizing the number of cliques of a fixed order in a graph with fixed number of vertices and bounded maximum degree, was recently completely resolved by Chase. Kirsch and Radcliffe raised a natural variant of this problem where the number of edges is fixed instead of the number of vertices. In this paper, we determine the maximum number of cliques of a fixed order in a graph with fixed number of edges and bounded maximum degree, resolving a conjecture by Kirsch and Radcliffe. We also give a complete characterization of the extremal graphs. 
\end{abstract}

\section{Introduction}

In extremal graph theory, there are many recent works in the literature involving maximizing the number of complete subgraphs under certain natural conditions. One such old and classical result is a generalization of Tur\'an's theorem where Zykov \cite{Z} determined the maximum number of cliques of a fixed order in a graph with fixed number of vertices and bounded clique number (also see, e.g., \cite{E}, \cite{H}, \cite{R}, and \cite{S}). In this paper, we consider similar problems when the maximum degree is bounded instead of the clique number. On this topic, Cutler and Radcliffe \cite{CR} proved the following result answering a question of Galvin \cite{G} in a stronger form.

\begin{theorem} [\cite{CR}] \label{CR}
For any positive integers $n$ and $\D$, among all graphs on $n$ vertices with maximum degree $\D$, the graph $qK_{\D+1} \cup K_r$ ($q$ disjoint copies of $K_{\Delta+1}$ together with a single copy of $K_r$) uniquely maximizes the total number of complete subgraphs, where $n = q(\D+1) + r$, $0 \le r \le \D$. 
\end{theorem}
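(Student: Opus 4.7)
My plan is to prove Theorem \ref{CR} by induction on $n$, first establishing that every extremal graph $G$ is a vertex-disjoint union of cliques each of order at most $\D+1$, and then pinning down the optimal configuration via a convexity argument. The base case $n \le \D+1$ is immediate: $K_n$ has maximum degree $n-1 \le \D$, so $K_n$ is the unique maximizer, since in any proper subgraph a missing edge can be restored without violating the degree bound, strictly increasing the total number of cliques.

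For the inductive step with $n \ge \D+2$, let $G$ be extremal and write $k(G)$ for its total clique count. I would first observe that any component $C$ with $|C| \le \D+1$ must itself be a clique: otherwise, replacing $C$ by $K_{|C|}$ preserves the degree bound (since $K_{|C|}$ has maximum degree $|C|-1 \le \D$) and strictly increases $k(G)$, contradicting extremality. The harder task is to show no component $C$ has $|C| > \D+1$. I would attempt a local-exchange argument: pick a vertex $v \in C$ of maximum degree in $G[C]$ and consider its closed neighborhood $N[v]$; since $|C| > |N[v]|$, the connectivity of $C$ guarantees an edge $xy$ with $x \in N(v)$ and $y \in C \setminus N[v]$. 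Deleting $xy$ and inserting a missing edge $xu$ within $N(v)$, where $u$ is chosen with degree less than $\D$ so that slack is available, preserves maximum degree at most $\D$ while weakly increasing cliques; a careful choice of exchange forces a strict increase until $N[v]$ is detached as a $K_{\D+1}$ component. Removing this component reduces to $n-\D-1$ vertices, where the inductive hypothesis completes the structural reduction.

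Once $G$ is a disjoint union $\bigsqcup_i K_{a_i}$ with $\sum_i a_i = n$ and each $a_i \le \D+1$, we have $k(G) = \sum_i (2^{a_i}-1)$. Strict convexity of $a \mapsto 2^a - 1$ implies that the shift $(a_i, a_j) \mapsto (a_i-1, a_j+1)$, valid whenever $a_i \le a_j < \D+1$ and $a_i \ge 1$, strictly increases the sum; iterating, the unique maximum is realized by $qK_{\D+1} \cup K_r$ with $n = q(\D+1)+r$, as claimed. The main obstacle throughout is the local-exchange step that rules out components of size exceeding $\D+1$: the rerouting must preserve the degree bound globally — so the added edge's endpoint $u \in N(v)$ must have degree less than $\D$ — while strictly improving $k(G)$. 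Proving the existence of such a slack vertex is the delicate point, and should follow from a counting argument on the edges crossing out of $N[v]$ compared with the degree deficits inside $N(v)$; this is the heart of the Cutler--Radcliffe proof.
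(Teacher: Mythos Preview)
The paper does not prove Theorem~\ref{CR}; it is quoted as a result of Cutler and Radcliffe and used only as background. So there is no ``paper's own proof'' to compare against, and your write-up should be judged on its own.

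As a plan it has a genuine gap at exactly the point you flag. The single-edge exchange ``delete $xy$ with $x\in N(v)$, $y\notin N[v]$, and add a missing edge $xu$ inside $N(v)$ with $d(u)<\D$'' can fail in two independent ways. First, the slack vertex $u$ need not exist: take $\D=2$ and $C=C_6$; for any $v$ both neighbours $u_1,u_2$ already have degree $\D$, so no $u\in N(v)$ has $d(u)<\D$, and your proposed counting of crossing edges versus degree deficits gives $2$ versus $0$, which yields nothing. Second, even when such a $u$ exists, the swap need not weakly increase the clique count: the cliques destroyed are those containing $xy$, indexed by cliques in $G[N(x)\cap N(y)]$, while the cliques created are indexed by cliques in $G[(N(x)\cap N(u))\cup\{v\}]$ in the new graph; nothing in your setup forces the latter to dominate the former, and it is easy to cook up $x,y$ with several common neighbours while $N(x)\cap N(u)=\{v\}$. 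Thus neither ``the exchange is available'' nor ``the exchange helps'' is established.

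Your reduction once $G$ is known to be a disjoint union of cliques is fine: strict convexity of $a\mapsto 2^{a}-1$ and the shift argument give uniqueness of $qK_{\D+1}\cup K_r$. But the heart of the matter---ruling out large components---needs a different mechanism than a one-edge local swap; the actual Cutler--Radcliffe argument proceeds via a more global comparison, not the rerouting you sketch.
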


A natural generalization of the above theorem is to ask whether the statement is still true if we maximize the number of cliques of a fixed size $t \ge 3$ instead. Motivated by another related question from Engbers and Galvin \cite{EG}, Gan, Loh, and Sudakov proved a special case of this generalization where $q=1$ and $t=3$. They also showed that maximizing the number of cliques of order $t$ for general $t > 3$ can be reduced to the case when $t=3$. After substantial progress made in \cite{ACM}, \cite{AM}, \cite{CR1}, \cite{EG}, \cite{GLS}, and \cite{LM}, Chase \cite{C} finally resolved the problem by solving the triangle case, and thus proving the following theorem. For the convenience of writing, we denote the number of cliques of order $t$ in a graph $G$ by $k_t(G)$.

\begin{theorem} [\cite{C}, \cite{GLS}] \label{Z}
For any positive integers $n$, $\D$, $t \ge 3$, and any graph $G$ on $n$ vertices with maximum degree $\D$, we have that $k_t(G) \le k_t(qK_{\D+1} \cup K_r)$, where $n = q(\D+1) + r$, $0 \le r \le \D$. Moreover, $qK_{\D+1} \cup K_r$ is the unique graph satisfying the equality when $t \le r$.
\end{theorem}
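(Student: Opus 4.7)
The plan is to prove Theorem~\ref{Z} via the natural two-stage strategy implicit in its attribution: first establish the triangle case ($t = 3$), then bootstrap from triangles to general $t \ge 4$.

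For the triangle case, I would proceed by induction on $n$. The base case $n \le \Delta + 1$ is trivial since $G \subseteq K_n$. For the inductive step with $n \ge \Delta + 2$, the aim is to show that some triangle-maximizer $G$ contains a copy of $K_{\Delta+1}$. Any such copy is automatically a connected component (its vertices already have degree $\Delta$ internally and so admit no external neighbors), so we can peel it off and apply induction to the remaining graph on $(q-1)(\Delta+1) + r$ vertices, obtaining
\[
k_3(G) \le \binom{\Delta+1}{3} + (q-1)\binom{\Delta+1}{3} + \binom{r}{3} = q\binom{\Delta+1}{3} + \binom{r}{3}.
\]

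For $t \ge 4$, I would use the identity $t \cdot k_t(G) = \sum_{v} k_{t-1}(G[N(v)])$. Since $|N(v)| \le \Delta$, each term is at most $\binom{d(v)}{t-1} \le \binom{\Delta}{t-1}$, with equality forcing $G[N(v)]$ to be a $K_\Delta$. I would establish a vertex-by-vertex comparison between the local $k_t$-contribution and the local $k_3$-contribution, scaled so that equality holds when $N(v)$ spans a $K_\Delta$. Summing yields a linear inequality $k_t(G) \le c_{t,\Delta}\, k_3(G) + (\text{correction for small components})$ in which the triangle bound transfers to a $K_t$-bound, and equality holds on $qK_{\Delta+1} \cup K_r$ when $r = 0$ or $r \ge t$. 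Uniqueness when $r \ge t$ should then follow by tracking the equality cases through both the local comparison and the inductive peeling.

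The hard part will be the inductive step in the triangle case: showing that some maximizer contains a copy of $K_{\Delta+1}$ whenever $n \ge \Delta + 2$. If $G$ contains none, a direct counting bound is not available, so one needs a local exchange or compression operation that produces a $K_{\Delta+1}$ (or strictly increases $k_3$) while respecting the degree cap $\Delta$. Designing such a switching robustly---in particular, rerouting edges so that no vertex ever exceeds degree $\Delta$, and handling the boundary case where the leftover vertices should form $K_r$ rather than being absorbed into the clique structure---is the central technical difficulty and is precisely where Chase's contribution lies.
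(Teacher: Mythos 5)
The paper does not actually prove Theorem~\ref{Z}; it is quoted as a black-box combination of a reduction by Gan, Loh, and Sudakov \cite{GLS} (from general $t$ to $t=3$) and Chase's \cite{C} resolution of the triangle case, each a substantial paper in its own right. Your proposal correctly recovers this two-stage architecture, but it is an outline rather than a proof. The crucial step in the $t=3$ case---showing that for $n \ge \Delta+2$ some triangle-maximizer contains a $K_{\Delta+1}$ component that can be peeled off---is left entirely open, and you say so yourself (``precisely where Chase's contribution lies''). That step \emph{is} the theorem; identifying where the difficulty sits is not the same as supplying the switching, compression, or counting argument that closes it.

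The bootstrap to $t\ge4$ is also unfinished. A bound of the form $k_t(G)\le c_{t,\Delta}\,k_3(G)+(\text{correction})$ cannot be calibrated by a single multiplicative constant: since $\binom{x}{t}/\binom{x}{3}=\frac{3!}{t!}(x-3)(x-4)\cdots(x-t+1)$ is strictly increasing in $x$, the constant $c_{t,\Delta}=\binom{\Delta+1}{t}/\binom{\Delta+1}{3}$ that makes the clique $K_{\Delta+1}$ tight over-counts on $K_r$ for $r<\Delta+1$. Plugging the triangle extremal value into such an inequality would give $q\binom{\Delta+1}{t}+c_{t,\Delta}\binom{r}{3}$, which strictly exceeds the target $q\binom{\Delta+1}{t}+\binom{r}{t}$ whenever $t\le r\le\Delta$, so the unnamed ``correction for small components'' has to carry the entire weight of the reduction and is nowhere specified. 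The actual argument in \cite{GLS} is not of this linear form. Since neither the base case nor the reduction is established, the proposal has a genuine gap, and it coincides exactly with the hard content of the two papers from which the theorem is cited.
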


Motivated by the edge analogue of Zykov's theorem (see, e.g., \cite{F} and \cite{FFK}), Kirsch and Radcliffe \cite{KR} started studying a natural variant of the problem in Theorem \ref{Z} where they fix the number of edges instead of vertices. This line of study is vastly motivated by the celebrated Kruskal-Katona theorem (see, \cite{Ka} and \cite{Kr}). In order to describe this, we introduce the notions of colex (colexicographic) order and colex graphs. Colex order on the finite subsets of the natural number set $\mathbb{N}$ is defined as the following: for $A, B \subseteq \mathbb{N}$, we have that $A < B$ if and only if $\max((A \setminus B) \cup (B \setminus A)) \in B$. The colex graph $L_m$ on $m$ edges is defined as the graph with the vertex set $\mathbb{N}$ and edges are the first $m$ sets of size $2$ in colex order. Note when $m=\binom{r}{2}+s$ where $0\le s< r$, then $L_m$ is the graph containing a clique of order $r$ and an additional vertex adjacent to $s$ vertices of the clique. Kruskal-Katona theorem implies the following: 

\begin{theorem} [\cite{Ka}, \cite{Kr}] \label{maxkt}
For any positive integers $t$, $m$, and any graph $G$ on $m$ edges, we have that $k_t(G) \le k_t(L_m)$. Moreover, $L_m$ is the unique graph satisfying the equality when $s \ge t-1$, where $m = \binom{r}{2} + s$ with $0 \le s < r$.
\end{theorem}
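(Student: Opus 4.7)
My plan is to derive the inequality as an immediate consequence of the Kruskal-Katona shadow theorem, together with the elementary fact that the lower shadow of a colex initial segment of $t$-sets is itself an initial colex segment of $2$-sets.

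First, I would identify $V(G)$ with a subset of $\mathbb{N}$ and view the family $\mathcal{K}$ of $t$-cliques of $G$ as a $t$-uniform family of subsets of $\mathbb{N}$, so that $|\mathcal{K}| = k_t(G)$. The crucial observation is that every $2$-subset contained in a member of $\mathcal{K}$ must be an edge of $G$, so the $2$-shadow satisfies $|\partial_2 \mathcal{K}| \le m$.

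Next, let $\mathcal{C}$ denote the set of the first $|\mathcal{K}|$ $t$-subsets of $\mathbb{N}$ in colex order. The shadow form of the Kruskal-Katona theorem gives $|\partial_2 \mathcal{C}| \le |\partial_2 \mathcal{K}| \le m$. Since the $2$-shadow of an initial colex segment of $t$-sets is itself an initial colex segment of $2$-sets, we can write $\partial_2 \mathcal{C} = E(L_{m'})$ for some $m' \le m$, and then $L_{m'} \subseteq L_m$. Every member of $\mathcal{C}$ is by construction a $t$-clique of $L_{m'}$, so
\[
k_t(G) = |\mathcal{C}| \le k_t(L_{m'}) \le k_t(L_m),
\]
which yields the desired bound.

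For the uniqueness statement when $s \ge t-1$, equality $k_t(G) = \binom{r}{t} + \binom{s}{t-1}$ forces equality at every step of the chain above. In particular, $|\partial_2 \mathcal{K}| = m$ implies $E(G) = \partial_2 \mathcal{K}$ (so $G$ has no edge outside any $t$-clique), and $\mathcal{K}$ attains equality in Kruskal-Katona. I expect this last part to be the main obstacle, since Kruskal-Katona's uniqueness clause is somewhat delicate in general. The hypothesis $s \ge t-1$ is precisely what makes $\binom{r}{t} + \binom{s}{t-1}$ uniquely attained by an initial colex segment: it forces $\mathcal{K}$, after relabeling $V(G)$, to consist of all $\binom{r}{t}$ $t$-subsets of $\{1, \ldots, r\}$ together with $\{r+1\} \cup T$ for the first $\binom{s}{t-1}$ sets $T$ of size $t-1$ in colex order. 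Combined with $E(G) = \partial_2 \mathcal{K}$, this pins down $G \cong L_m$.
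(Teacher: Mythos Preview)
The paper does not supply its own proof of this theorem; it simply attributes the result to Kruskal and Katona and states that ``Kruskal--Katona theorem implies the following.'' Your proposal is therefore exactly in line with the paper's indication: you spell out the standard derivation from the shadow form of Kruskal--Katona, and the inequality part of your argument is correct as written.

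One small comment on the uniqueness part. Your plan is correct in spirit, but the step ``$\mathcal{K}$ attains equality in Kruskal--Katona, hence up to relabeling $\mathcal{K}$ is the initial colex segment'' is not automatic: the equality characterization for Kruskal--Katona (due to F\"uredi--Griggs and M\"ors) is more permissive than ``initial segment'' in general, and one genuinely needs the hypothesis $s\ge t-1$ to pin down the structure. You do flag this as the main obstacle and correctly identify why $s\ge t-1$ is the right threshold (the last edge of $L_m$ then lies in at least one $K_t$, so $k_t(L_{m-1})<k_t(L_m)$, forcing $m'=m$ in your chain). To make this airtight you would either quote the precise equality case of Kruskal--Katona at the level needed, or argue directly: from $m'=m$ and $|\mathcal{C}|=k_t(L_m)$ you know $\mathcal{C}$ is exactly the set of $t$-cliques of $L_m$; then from $|\partial_2\mathcal{K}|=|\partial_2\mathcal{C}|$ and the iterated one-step Kruskal--Katona equality one shows the $(t-1)$-shadow, then the $(t-2)$-shadow, etc., of $\mathcal{K}$ are all forced, which together with $E(G)=\partial_2\mathcal{K}$ yields $G\cong L_m$.
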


\begin{remark*}
For Theorem \ref{maxkt}, if $r \ge t$ and $s < t-1$, a graph $G$ satisfies the equality if and only if $G$ is an $m$-edge graph which contains $K_r$ as a subgraph. On the other hand, if $r < t$, then any graph with $m$ edges satisfies the equality, because no graph on $m$ edges has a copy of $K_t$.
\end{remark*}

We often use the following weaker version of Theorem \ref{maxkt}, which appears as Exercise 31b in Chapter 13 from Lov\'asz's book \cite{L}. From now on, the generalized binomial coefficient $\binom{x}{k}$ is defined to be the number $\frac{1}{k!} (x)(x-1)(x-2) \cdots (x-k+1)$, which exists for all real $x$.

\begin{corollary} \label{cor:maxkt}
Let $t \ge 3$ be an integer, and let $x \ge t$ be a real number. Then, every graph with exactly $\binom{x}{2}$ edges contains at most $\binom{x}{t}$ cliques of order $t$.
\end{corollary}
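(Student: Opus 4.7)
The plan is to invoke the generalized-binomial (Lov\'asz) form of the Kruskal--Katona shadow theorem, applied to the family of $K_t$-subgraphs of $G$. Let $\mathcal{F}$ be the $t$-uniform set system on $V(G)$ whose members are the vertex sets of copies of $K_t$ in $G$, and set $N = k_t(G) = |\mathcal{F}|$. If $N = 0$ the desired inequality is immediate, so assume $N \ge 1$. Since $y \mapsto \binom{y}{t}$ is continuous and strictly increasing on $[t-1,\infty)$ with range $[0,\infty)$, there is a unique real $y$ with $\binom{y}{t} = N$; the assumption $N \ge 1 = \binom{t}{t}$ forces $y \ge t$.

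Every $2$-subset of a $K_t$ in $G$ is itself an edge of $G$, so the iterated $2$-shadow $\partial^{t-2}\mathcal{F}$ is contained in $E(G)$, yielding $|\partial^{t-2}\mathcal{F}| \le \binom{x}{2}$. The Lov\'asz form of Kruskal--Katona asserts that for any $t$-uniform family $\mathcal{F}$ with $|\mathcal{F}| = \binom{y}{t}$ and $y \ge t-1$, one has $|\partial^{t-2}\mathcal{F}| \ge \binom{y}{2}$. Combining the two inequalities gives $\binom{y}{2} \le \binom{x}{2}$; since $\binom{\cdot}{2}$ is strictly increasing on $[1,\infty)$ this forces $y \le x$. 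Monotonicity of $\binom{\cdot}{t}$ on $[t-1,\infty)$ then completes the argument: $N = \binom{y}{t} \le \binom{x}{t}$, as required.

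The main (and essentially only) point is that one must invoke the generalized-binomial version of Kruskal--Katona rather than the integer version stated as Theorem \ref{maxkt}; this is precisely the content of the cited exercise in Lov\'asz's book. As a less streamlined alternative, one could try to deduce the corollary from Theorem \ref{maxkt} directly by writing the integer $\binom{x}{2} = \binom{r}{2} + s$ with $0 \le s < r$ and then bounding $k_t(L_m) = \binom{r}{t} + \binom{s}{t-1}$, but that route reduces to verifying the pointwise inequality $\binom{r}{t}+\binom{s}{t-1} \le \binom{r+\theta}{t}$, where $\theta \in [0,1)$ is the unique solution of $\binom{r+\theta}{2} = \binom{r}{2} + s$. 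This is a mildly unpleasant calculus exercise that the shadow argument above bypasses entirely, and it is the only step I would expect to pose any difficulty at all in a from-scratch proof.
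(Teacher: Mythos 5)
Your proof is correct. The paper itself offers no proof of this corollary: it is cited directly to Exercise 31b in Chapter 13 of Lov\'asz's book, which is exactly the real-parameter shadow form of Kruskal--Katona that you invoke. Your derivation---taking $\mathcal{F}$ to be the $t$-uniform family of vertex sets of $K_t$'s, noting $\partial^{t-2}\mathcal{F}\subseteq E(G)$, applying the iterated Lov\'asz shadow bound $|\partial^{t-2}\mathcal{F}|\ge\binom{y}{2}$ where $\binom{y}{t}=k_t(G)$, and then using monotonicity of $\binom{\cdot}{2}$ and $\binom{\cdot}{t}$ on $[t-1,\infty)$---is the standard argument behind that exercise, and each step is handled correctly (in particular $k_t(G)\ge 1$ does force $y\ge t$, so the hypothesis for applying the shadow bound at every stage of the iteration is met). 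The alternative route you sketch through the integer Theorem~\ref{maxkt} would also work but, as you say, requires verifying $\binom{r}{t}+\binom{s}{t-1}\le\binom{x}{t}$ when $\binom{x}{2}=\binom{r}{2}+s$; the shadow argument sidesteps that calculation and is the cleaner of the two.
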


Kirsch and Radcliffe conjectured the following in \cite{KR} while studying the edge variant of the problem in Theorem \ref{Z}.

\begin{conjecture} [\cite{KR}] \label{conjecture}
For any $t \ge 3$, if $G$ is a graph with $m$ edges and maximum degree at most $\D$, then $k_t(G) \le k_t(qK_{\D+1} \cup L_b)$, where $m = q\binom{\D+1}{2} + b$ and $0 \le b < \binom{\D+1}{2}$. 
\end{conjecture}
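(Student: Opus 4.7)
The plan is to prove the conjecture by strong induction on $m$, the number of edges of $G$. Throughout, let $f(m):=k_t(qK_{\D+1}\cup L_b)$, where $m=q\binom{\D+1}{2}+b$ with $0\le b<\binom{\D+1}{2}$; writing $b=\binom{r}{2}+s$ with $0\le s<r$ gives the explicit formula $f(m)=q\binom{\D+1}{t}+\binom{r}{t}+\binom{s}{t-1}$. The base case $m<\binom{\D+1}{2}$ forces $q=0$, and the statement is then exactly Theorem \ref{maxkt}.

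For the inductive step, I would first establish a superadditivity lemma: $f(m_1)+f(m_2)\le f(m_1+m_2)$ for all nonnegative integers $m_1,m_2$. This is a direct calculation from the explicit formula above, split into the cases where $b_1+b_2$ does or does not cross the $\binom{\D+1}{2}$ boundary, and uses Theorem \ref{maxkt}. Armed with this lemma, if $G$ is disconnected, writing $G=G_1\sqcup G_2$ gives $k_t(G_i)\le f(m_i)$ by the induction hypothesis, and superadditivity yields $k_t(G)\le f(m)$. Hence we may assume $G$ is connected.

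Now assume $G$ is connected. A clean structural observation handles one case: if $G$ contains a copy of $K_{\D+1}$ as a subgraph, every vertex of that subgraph is already at degree $\D$ and has no edges outside, so the $K_{\D+1}$ forms a component; connectivity forces $G=K_{\D+1}$, giving $k_t(G)=\binom{\D+1}{t}=f(m)$. Thus we may additionally assume $G$ has no $K_{\D+1}$ subgraph, so the clique number of $G$ is at most $\D$. The residual case is: $G$ is connected with $m\ge\binom{\D+1}{2}$ edges and clique number at most $\D$.

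\textbf{The main obstacle} lies in this residual case. The natural strategy is to remove a well-chosen substructure, such as a vertex $v$ of small degree $d$ or an edge $e$ lying in few copies of $K_t$, and close the induction by bounding the drop in $k_t$ by the corresponding marginal of $f$. The subtlety is that $f$ is piecewise linear with breakpoints at multiples of $\binom{\D+1}{2}$, and the per-edge marginal $f(m)-f(m-1)=k_t(L_b)-k_t(L_{b-1})$ can be as small as $0$ (for instance when $b=1$), so a blind deletion is too lossy. I expect that actively exploiting the absence of $K_{\D+1}$, which forces some vertex of $G$ to have a non-clique neighborhood, lets one find a deletion whose loss is always absorbed by the $f$-marginal in the appropriate residue class $b$; calibrating this balance across all residues $b$ is where the bulk of the technical work lies.
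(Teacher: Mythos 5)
Your overall framework — induction on $m$, reducing to connected $G$ with no $K_{\D+1}$ subgraph — matches the paper's minimum-counterexample setup (its Lemma~\ref{lem:connectt} and the surrounding discussion). Two points, however, need more care than you give them. First, the superadditivity claim $f(m_1)+f(m_2)\le f(m_1+m_2)$ is true but is not a one-line consequence of Theorem~\ref{maxkt}: when $b_1+b_2$ crosses $\binom{\D+1}{2}$, the unconstrained colex graph $L_{b_1+b_2}$ may have maximum degree exceeding $\D$, so you really need a case analysis comparable to the four cases in the paper's proof of Lemma~\ref{lem:connectt}, which lean on the convexity inequality in Lemma~\ref{easy:convex}. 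This is doable, but it is not ``direct.''

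The genuine gap is the residual case, which you correctly flag but do not resolve, and which is where essentially all of the paper's work lies. The paper does not take the deletion route you envision — as you yourself observe, the per-edge marginal of $f$ can be zero, and a per-vertex marginal is not much easier to control across all residue classes of $b$. Instead the paper closes the argument globally. Writing $\mu_t(v)$ for the number of $(t-1)$-subsets of $N(v)$ that do not induce a $K_{t-1}$, one has
\[
k_t(G)=\frac{1}{t}\sum_{v\in V(G)}\binom{d(v)}{t-1}-\frac{1}{t}\sum_{v\in V(G)}\mu_t(v),
\]
and the proof bounds the two sums separately. The first sum is controlled via a minimum-degree lower bound (every vertex has degree at least $r$, obtained from an edge-cut argument, Lemma~\ref{lem:cutt}), a lower bound on $|V(G)|$ (Lemma~\ref{lem:largen}), and a convexity lemma for sums of $\binom{x_i}{t-1}$ with $x_i$ constrained to $[r,\D]$ (Lemma~\ref{convex}). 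The second sum is bounded from below by first counting induced $K_{1,2}$'s (at least $r^2$, again from the edge-cut bound), then converting missing edges in a neighborhood into missing $(t-1)$-cliques via a convexity result for $x\mapsto\binom{u(x)}{t-1}$ with $\binom{u}{2}=x$ (Lemmas~\ref{gconv} and~\ref{count}); that conversion in turn needs the fact that every neighborhood induces more than $\binom{r-1}{2}$ edges (Lemma~\ref{lem:mindegt}). None of these ingredients resembles the deletion argument you sketch, and without a substitute for them your proposal does not close the hard case.
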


Kirsch and Radcliffe \cite{KR} proved Conjecture \ref{conjecture} for $t=3$ and $\D \le 8$. They have also pointed out that unlike the vertex version \cite{GLS}, it might be difficult to reduce the general $t$ case to the $t=3$ case. Later, Kirsch and Radcliffe \cite{KR1} proved a weaker version of Conjecture \ref{conjecture}. For the convenience of writing this result, for a graph $G$, define $\tilde{k}(G) = \sum_{t \ge 2} k_t(G)$.

\begin{theorem} [\cite{KR1}] \label{KR1}
If $G$ is a graph with $m$ edges and maximum degree at most $\D$, then $\tilde{k}(G) \le \tilde{k}(qK_{\D+1} \cup L_b)$, where $m = q\binom{\D+1}{2} + b$ and $0 \le b < \binom{\D+1}{2}$. Moreover, $qK_{\D+1} \cup L_b$ is the unique graph satisfying the equality when $s \neq 1$, where $b = \binom{r}{2} + s$ with $0 \le s < r$. If $s=1$, then $qK_{\D+1} \cup K_r \cup K_2$ is the only extremal graph other than the one already described. 
\end{theorem}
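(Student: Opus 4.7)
The plan is to proceed by induction on the number of edges $m$, peeling off a $K_{\D+1}$ component at each step until the Kruskal--Katona base case takes over.

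For the base case $m < \binom{\D+1}{2}$ (so $q = 0$ and $b = m$), Theorem~\ref{maxkt} gives $k_t(G) \le k_t(L_m)$ for every $t \ge 2$; summing these inequalities yields $\tilde{k}(G) \le \tilde{k}(L_m) = \tilde{k}(L_b)$, matching the claimed bound.

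For the inductive step, suppose $m \ge \binom{\D+1}{2}$. The structural observation driving the reduction is that if $G$ has a vertex $v$ with $\deg(v) = \D$ and $G[N(v)]$ is a clique, then $\{v\} \cup N(v)$ is a connected component isomorphic to $K_{\D+1}$, because each $u \in N(v)$ already has all $\D$ of its neighbors inside this set and hence no edge leaves it. Given such a component, I would write $G = K_{\D+1} \sqcup G'$ with $G'$ having $m - \binom{\D+1}{2}$ edges and maximum degree at most $\D$, apply induction to $G'$, and finish via the additivity $\tilde{k}(G) = \tilde{k}(K_{\D+1}) + \tilde{k}(G')$.

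The main obstacle is the complementary case, in which $G$ has no $K_{\D+1}$ component despite having at least $\binom{\D+1}{2}$ edges. Here I would attempt a local-swap (compression) argument: pick a vertex $v$ of degree $\D$ with non-adjacent $u, w \in N(v)$, delete a carefully chosen edge $e$ of $G$, and add $uw$ in its place to obtain $G^*$ with the same edge count, maximum degree still at most $\D$, and $\tilde{k}(G^*) \ge \tilde{k}(G)$. Iterating should eventually produce a $K_{\D+1}$ component and let us reduce to the easy case. The hard part is picking $e$ so that the cliques destroyed by removing it do not outnumber the cliques created by inserting $uw$, and this control must hold uniformly across all clique sizes. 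As an auxiliary tool, the per-vertex bound $\tilde{k}_v(G) \le 2^{\deg v} - 1$, where $\tilde{k}_v$ counts cliques of size $\ge 2$ through $v$, is strict at every vertex of degree $\D$ whenever there is no $K_{\D+1}$ component, providing slack that one might hope to aggregate through a double counting argument.

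For the equality characterization, the induction forces each peeled-off component to be exactly $K_{\D+1}$ and the final residue to be a tight instance of Kruskal--Katona for $\tilde{k}$. When $s \ne 1$, applying the uniqueness clause of Theorem~\ref{maxkt} with $t = 3$ (for $s \ge 2$) or with a direct edge-count argument (for $s = 0$) pins down the residue as $L_b$ uniquely. At $s = 1$, the extra edge of $L_b$ is a pendant and contributes no clique of size $\ge 3$, so detaching it as an isolated $K_2$ yields the alternative extremal $qK_{\D+1} \cup K_r \cup K_2$ with the same value of every $k_t$, and hence of $\tilde{k}$.
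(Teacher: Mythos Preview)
The paper does not give a standalone proof of this theorem; it is quoted from \cite{KR1}, and the paper only remarks that it follows as a corollary of Theorem~\ref{main}. Concretely, once Theorem~\ref{main} is established, summing the inequalities $k_t(G) \le k_t(qK_{\D+1}\cup L_b)$ over all $t \ge 3$ and adding the trivial $t=2$ identity $k_2(G)=m=k_2(qK_{\D+1}\cup L_b)$ yields the bound on $\tilde{k}(G)$. The uniqueness statement then comes from intersecting the extremal families $\L_{t,\D}(b)$ over all $3 \le t \le \D+1$: for $s \ge 2$ the $t=3$ case already forces $L = L_b$; for $s=0$ the constraint ``contains $K_r$ and has $\binom{r}{2}$ edges'' again forces $L_b$; for $s=1$ the constraint ``contains $K_r$ and has $\binom{r}{2}+1$ edges with max degree $\le \D$'' leaves exactly the two possibilities $L_b$ and $K_r \cup K_2$.

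Your proposal takes a genuinely different route (induction on $m$ with a compression step), but it has a real gap at the heart of the argument. You explicitly flag the swap step as ``the hard part'' and do not carry it out: you need to exhibit an edge $e$ whose deletion destroys no more cliques (summed over all sizes) than inserting $uw$ creates, and you offer no mechanism for choosing such an $e$ or proving the required inequality. The slack observation $\tilde{k}_v(G) < 2^{\D}-1$ at degree-$\D$ vertices is true but does not by itself compare the clique counts lost at $e$ to those gained at $uw$; aggregating it ``through a double counting argument'' is a hope, not an argument. There is also a secondary gap: your compression presupposes a vertex of degree exactly $\D$, but a graph with $m \ge \binom{\D+1}{2}$ edges and maximum degree at most $\D$ need not have one, and you do not say what to do in that case. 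Without resolving the swap inequality (and this degree issue), the inductive step does not go through.
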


In this paper, we resolve Conjecture \ref{conjecture} in a stronger form by characterizing the extremal graphs. By handling the general $t\ge 3$ case directly, we circumvented the aforementioned difficulty of reducing $t>3$ to the triangle case. Note that adding or deleting isolated vertices from a graph does not change the number of edges nor the number of $K_t$'s. Then, for the sake of this paper, two graphs are considered to be equivalent if they are isomorphic after deleting all the isolated vertices. In other words, whenever we talk about graphs in this paper, we assume it has minimum degree at least $1$. 

In order to characterize the extremal graphs, let us define a class of graphs. 

\begin{definition}
For $m=0$, let $\L_{t,\D}(m)$ be the family of empty graph, and for $0 < m \le \binom{\D+1}{2}$, call $\L_{t,\D}(m)$ to be the  following family of graphs, where $m = \binom{r}{2} + s$ with $0 \le s < r$.
\begin{itemize}
\item If $s \ge t-1$, then $\L_{t,\D}(m)$ contains just the colex graph $L_m$.
\item If $r \ge t$ and $s < t-1$, then $\L_{t,\D}(m)$ contains not only $L_m$, but also all $m$-edge graphs with maximum degree at most $\D$ that contain $K_r$ as a subgraph.
\item If $r < t$, then $\L_{t,\D}(m)$ contains not only $L_m$, but also all $m$-edge graphs with maximum degree at most $\D$.
\end{itemize} 
\end{definition}

Now, we state our main result. 

\begin{theorem} \label{main}
For any $3 \le t \le \D + 1$, if $G$ is a graph with $m$ edges and maximum degree at most $\D$, then $k_t(G) \le k_t(qK_{\D+1} \cup L_b)$, where $m = q\binom{\D+1}{2} + b$ and $0 \le b < \binom{\D+1}{2}$. Moreover, $G$ is an extremal graph if and only if $G$ is isomorphic to $qK_{\D+1} \cup L$ for some $L \in \L_{t,\D}(b)$. 
\end{theorem}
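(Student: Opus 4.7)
The plan is to prove Theorem~\ref{main} by strong induction on the number of edges $m$. The base case is $q=0$, equivalently $m < \binom{\D+1}{2}$; here the colex graph $L_m$ has maximum degree at most $\D$, and both the bound and the extremal family $\L_{t,\D}(m)$ are exactly the content of Theorem~\ref{maxkt} together with the remark following it.

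For the inductive step with $m \ge \binom{\D+1}{2}$, the central target is the following structural claim: \emph{every extremal $G$ contains $K_{\D+1}$ as a connected component}. Granting this claim, write $G = K_{\D+1} \sqcup G'$, apply the inductive hypothesis to $G'$ (which has $(q-1)\binom{\D+1}{2} + b$ edges and maximum degree at most $\D$), and add $\binom{\D+1}{t}$: this immediately yields the bound $k_t(G) \le q\binom{\D+1}{t} + k_t(L_b)$ and the characterization $G \cong qK_{\D+1} \sqcup L$ for some $L \in \L_{t,\D}(b)$.

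I would prove the structural claim by splitting on the connectivity of $G$. If $G$ is disconnected, the inductive hypothesis applies to each component $C_i$ (each has strictly fewer edges than $G$): writing $e(C_i) = q_i\binom{\D+1}{2} + b_i$, we get $k_t(C_i) \le q_i\binom{\D+1}{t} + k_t(L_{b_i})$, with equality for the connected graph $C_i$ only if either ($q_i = 0$ and $C_i \in \L_{t,\D}(b_i)$) or ($q_i = 1$, $b_i = 0$, and $C_i = K_{\D+1}$), since the inductive extremal is otherwise disconnected. Summing and merging the contributions of the ``small'' components ($q_i = 0$) via Kruskal--Katona applied to their disjoint union (Theorem~\ref{maxkt}) gives the target bound; tracing equality forces one of the $C_i$ to equal $K_{\D+1}$ whenever $q \ge 1$. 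If $G$ is connected, the main tool is the edge-count inequality: every edge of $G$ lies in at most $\binom{\D-1}{t-2}$ copies of $K_t$ (its common neighborhood has size at most $\D-1$), giving $k_t(G) \le m\binom{\D+1}{t}/\binom{\D+1}{2}$, with equality forcing every edge to lie in a $K_{\D+1}$; connectedness and the degree bound then pin $G = K_{\D+1}$. In particular a connected $G$ with $m > \binom{\D+1}{2}$ cannot be extremal.

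The main obstacle is the connected subcase with $m > \binom{\D+1}{2}$: although the per-edge inequality is strict for $G \neq K_{\D+1}$, it is not quantitatively strong enough to dominate the target $q\binom{\D+1}{t} + k_t(L_b)$, because $m\binom{\D+1}{t}/\binom{\D+1}{2}$ generally exceeds $q\binom{\D+1}{t} + k_t(L_b)$ (one checks $k_t(L_b) < b\binom{\D+1}{t}/\binom{\D+1}{2}$ already for small $b$). I would close the gap by exhibiting an explicit $K_{\D+1}$ subgraph inside $G$ -- available by a Kruskal--Katona style pigeonhole once $G$ is dense enough, or by analysing a vertex of near-maximum degree together with its neighborhood -- and then detaching it to produce a disconnected graph of the same edge count with strictly larger $k_t$, thereby reducing to the disconnected case. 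Making this detachment rigorous while respecting the degree constraint and verifying the strict increase in $k_t$ is the delicate technical step where I expect most of the work to reside.
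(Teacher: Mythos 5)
Your high-level frame -- induction on $m$, base case from Kruskal--Katona, split on connectivity of a putative counterexample -- matches the paper's ``minimum counterexample'' setup, and the per-edge bound you quote (each edge in at most $\binom{\D-1}{t-2}$ copies of $K_t$) is exactly what the paper uses to dispatch the boundary case $b=0$ in Lemma~\ref{lem:posr}. But the plan you sketch for the decisive connected case fails at the first step. A connected graph $G$ with maximum degree at most $\D$ that contains $K_{\D+1}$ as a subgraph must be $K_{\D+1}$ itself: each vertex of the clique already has all $\D$ of its permitted neighbors inside the clique, so no edge leaves the clique, and connectedness forces $V(G)$ to equal the clique. Hence for a connected $G$ with $m>\binom{\D+1}{2}$ edges there is simply no copy of $K_{\D+1}$ to exhibit, and your ``detach it to reduce to the disconnected case'' step has nothing to act on. This is not a detail to be polished; it is the reason the paper's proof of the connected case looks nothing like a pigeonhole for a $K_{\D+1}$. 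Instead the paper runs a quantitative argument that is the real content of the theorem: it shows the connected counterexample has no edge cut of size less than $r$ and hence minimum degree at least $r$ (Lemmas~\ref{lem:cut} and~\ref{lem:cutt}), deduces $\D>r\ge t$ (Lemma~\ref{Delta>r}, Corollary~\ref{valueofr}), shows every vertex's neighborhood induces more than $\binom{r-1}{2}$ edges (Lemma~\ref{lem:mindegt}), proves convexity of the map $\binom{x}{2}\mapsto\binom{x}{t-1}$ to convert missing edges in a neighborhood into missing $K_{t-1}$'s (Lemmas~\ref{gconv} and~\ref{count}, Corollary~\ref{identity}), and combines a degree-sequence convexity bound on $\sum_v\binom{d(v)}{t-1}$ (Lemmas~\ref{convex} and~\ref{lem:betterbound}) with a lower bound on $\sum_v\mu_t(v)$ (Corollary~\ref{bound2}) to force $k_t(G)<T_t$. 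None of this is anticipated in your proposal, and you yourself flag this as the place ``where I expect most of the work to reside'' -- the gap is exactly there.

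A secondary issue: your disconnected case is also too quick. Merging the components with $q_i=0$ via Kruskal--Katona on their disjoint union does not respect the maximum-degree constraint; the colex graph on $\sum_{q_i=0}e(C_i)$ edges can have a vertex of degree exceeding $\D$ once that sum is at least $\binom{\D+1}{2}$, so ``the'' extremal graph for that edge count is again $K_{\D+1}\cup L_{b'}$ rather than a colex graph, and one is back to proving the statement one is trying to use. The paper's Lemma~\ref{lem:connectt} handles precisely this by moving a computed number of edges between two components and invoking the elementary convexity Lemma~\ref{easy:convex} in four separate cases; that part of your outline is repairable along these lines, but it is not a one-line application of Theorem~\ref{maxkt}.
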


Note that if $t > \D + 1$ in Theorem \ref{main}, then any graph $G$ with maximum degree $\D$ cannot contain a copy of $K_t$, hence $k_t(G) = 0$. It can be easily checked that the case $q=0$ in Theorem \ref{main} is a direct corollary of Theorem \ref{maxkt}. Observe that since $k_t(L_{b_1}) \le k_t(L_{b_2})$ when $b_1 \le b_2$, the extremal number of $K_t$'s is non-decreasing in terms of $m$ in Theorem \ref{main}. We also remark that for the extremal construction in Theorem \ref{main}, the most restrictive case is when $s \ge t-1$, and we get a unique extremal graph. Note that as a corollary of Theorem \ref{main}, we can obtain Theorem \ref{KR1}.  

The rest of this paper is organized as follows. As the general proof for Theorem \ref{main} is quite cumbersome, we devote Section 2 for developing some structural deductions of a minimum counterexample if exists. We then use this structural information to give a short proof for the case $t=3$ in Section 3. We prove Theorem \ref{main} in its full generality in Section 4. In Section 5, we end with a few concluding remarks.

\section{Minimum counterexample}

From now on, we assume that $G$ is a minimum counterexample with respect to the number of edges to Theorem \ref{main}. In particular, we assume that $G$ has $q\binom{\Delta+1}{2}+\binom{r}{2}+s$ many edges where $0\le s < r\le \Delta$ (note that any non-negative integer $b$ can be uniquely written in the form $b = \binom{r}{2} + s$ for integers $0 \le s < r$), $k_t(G)\ge q\binom{\Delta+1}{t}+\binom{r}{t}+\binom{s}{t-1}$, and $G$ is not one of the extremal structures (i.e. $G\neq qK_{\Delta+1}\cup L$ where $L\in \mathcal{L}_{t, \Delta}(\binom{r}{2}+s)$). Furthermore, we assume that any other graph with the same number of edges has at most as many $K_t$'s as $G$. As we have discussed in the introduction, we may also assume that $q\ge 1$. In this section, we will show that $G$ must have certain structural properties which will be used in later sections to achieve contradictions.

\begin{lemma}
	\label{lem:posr}
	If $G$ is a minimum counterexample, then $r \ge 2$. 
\end{lemma}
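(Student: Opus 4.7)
The plan is to argue by contradiction: assume $r=1$, so $s=0$, $m=q\binom{\D+1}{2}$, and the target extremal graph from Theorem~\ref{main} is $qK_{\D+1}$. (Since $0\le s<r$ forces $r\ge 1$, the only case that needs ruling out is $r=1$.) I would split on whether $G$ contains a copy of $K_{\D+1}$.

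If $G$ contains a $K_{\D+1}$ on a vertex set $V_1$, then each vertex of $V_1$ already has degree $\D$ inside $V_1$; since $G$ has maximum degree at most $\D$, no edge leaves $V_1$, so $V_1$ is a connected component of $G$. The subgraph $H=G-V_1$ has $(q-1)\binom{\D+1}{2}$ edges, strictly fewer than $G$, so by the minimality hypothesis Theorem~\ref{main} holds for $H$: $k_t(H)\le(q-1)\binom{\D+1}{t}$, with equality only if $H\cong(q-1)K_{\D+1}$. Combining with the counterexample hypothesis $k_t(G)=\binom{\D+1}{t}+k_t(H)\ge q\binom{\D+1}{t}$ forces equality, giving $G\cong qK_{\D+1}$, which is extremal, a contradiction.

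If $G$ contains no $K_{\D+1}$, I would delete an arbitrary edge $e=uv$ and bound the number of $K_t$'s in $G$ through $e$ strictly. This count equals $k_{t-2}(G[S])$ where $S=N(u)\cap N(v)$, and $|S|\le \D-1$ since $v\in N(u)$ and $\deg(u)\le\D$. Hence $k_{t-2}(G[S])\le \binom{|S|}{t-2}\le \binom{\D-1}{t-2}$, with equality throughout only if $|S|=\D-1$ and every $(t-2)$-subset of $S$ induces a clique, that is, $G[S]=K_{\D-1}$; but then $\{u,v\}\cup S$ spans a $K_{\D+1}$, contradicting the case assumption. Thus each edge lies in at most $\binom{\D-1}{t-2}-1$ copies of $K_t$. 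Applying minimality to $G-e$, which has $m-1=(q-1)\binom{\D+1}{2}+\binom{\D}{2}+(\D-1)$ edges, yields $k_t(G-e)\le(q-1)\binom{\D+1}{t}+\binom{\D}{t}+\binom{\D-1}{t-1}$, and collapsing via two applications of Pascal's rule gives
\begin{align*}
k_t(G) &\le (q-1)\binom{\D+1}{t}+\binom{\D}{t}+\binom{\D-1}{t-1}+\binom{\D-1}{t-2}-1 \\
&= q\binom{\D+1}{t}-1,
\end{align*}
contradicting $k_t(G)\ge q\binom{\D+1}{t}$.

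The main obstacle I anticipate is securing the strict inequality in the second case: the naive bound $\binom{\D-1}{t-2}$ on $K_t$'s through a single edge telescopes via Pascal exactly to $q\binom{\D+1}{t}$, which is not a contradiction. Extracting the extra $-1$ relies on the observation that the extremal clique-count configuration inside $N(u)\cap N(v)$ would, together with $u$ and $v$, reconstruct a $K_{\D+1}$ in $G$. Case~1 is a routine peeling-and-induction reduction by comparison.
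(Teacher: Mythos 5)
Your approach is genuinely different from the paper's. The paper proves Lemma~\ref{lem:posr} with a single global double-counting argument: it bounds the number of $K_t$'s through each edge by $\binom{\Delta-1}{t-2}$, sums over all $m=q\binom{\Delta+1}{2}$ edges, and divides by the overcount $\binom{t}{2}$ to get $k_t(G)\le q\binom{\Delta+1}{t}$; the resulting equality case forces every edge into a $K_{\Delta+1}$, so $G\cong qK_{\Delta+1}$. You instead split on whether $G$ already contains a $K_{\Delta+1}$. Your Case~1 (peel off a disconnected clique and invoke minimality) is correct and routine. Your Case~2 (delete one edge, bound the cliques through it strictly, and apply minimality to $G-e$) is arithmetically sound and works when $t\ge 4$.

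The gap is in Case~2 for $t=3$. You argue that equality in $k_{t-2}(G[S])\le\binom{|S|}{t-2}\le\binom{\Delta-1}{t-2}$ forces ``every $(t-2)$-subset of $S$ induces a clique, that is, $G[S]=K_{\Delta-1}$.'' When $t-2=1$, the condition ``every $1$-subset of $S$ induces a clique'' is vacuous, so the conclusion $G[S]=K_{\Delta-1}$ does not follow. Concretely, an edge $uv$ can lie on exactly $\Delta-1$ triangles simply because $u,v$ have $\Delta-1$ common neighbors, with no requirement that those common neighbors be pairwise adjacent; such an edge does \emph{not} complete to a $K_{\Delta+1}$, and yet achieves your upper bound with equality. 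So your claimed strict bound $\binom{\Delta-1}{t-2}-1$ on cliques through an arbitrary edge is false for $t=3$, and the final display $k_t(G)\le q\binom{\Delta+1}{t}-1$ collapses.

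The argument can be patched for $t=3$ without abandoning your structure, but it requires a global step that your proposal omits: observe that if \emph{every} edge is in exactly $\Delta-1$ triangles, then for adjacent $u,v$ we get $N[u]=N[v]$, which propagates to show $G[N[u]]\cong K_{\Delta+1}$ and hence lands you back in Case~1. Therefore in Case~2 some edge lies on at most $\Delta-2$ triangles, and deleting \emph{that} edge makes your computation go through. Note this rescue is essentially the same equality-case analysis the paper performs after its global counting bound, so for $t=3$ the two arguments are not really independent.
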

	
\begin{proof}
	By the assumption that $0 \le s < r$, we have that $r \ge 1$. Suppose for the sake of contradiction that $r=1$. This implies that $s = 0$ and $|E(G)|=q\binom{\Delta+1}{2}$. For any edge $e$, its endpoints have at most $\D-1$ common neighbors in $G$. Hence, $e$ belongs in at most $\binom{\Delta-1}{t-2}$ distinct copies of $K_t$. Summing over all edges, since each $K_t$ is counted $\binom{t}{2}$ times, $k_t(G)$ is at most $q\binom{\Delta+1}{2}\frac{\binom{\Delta-1}{t-2}}{\binom{t}{2}}=q\binom{\Delta+1}{t}$. Then, $k_t(G)=q\binom{\Delta+1}{t}$ and every edge is in $\binom{\Delta-1}{t-1}$ copies of $K_t$. It is not hard to check that every edge must belong in a clique of order $\Delta+1$ and thus $G$ is a union of such cliques, contradicting the fact that $G$ is not an extremal structure.
\end{proof}


\begin{lemma}
	\label{alledget}
	If $G$ is a minimum counterexample, then every edge in $G$ is in a copy of $K_t$. 
\end{lemma}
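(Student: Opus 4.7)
The plan is to argue by contradiction. Suppose some edge $e$ of $G$ lies in no copy of $K_t$; set $G' = G - e$. Then $k_t(G') = k_t(G)$ and $G'$ has $m-1$ edges and maximum degree at most $\Delta$. Since $G$ is a minimum counterexample, Theorem~\ref{main}, including its uniqueness clause, applies to $G'$. Writing $b - 1$ in canonical form $\binom{r'}{2} + s'$ with $0 \le s' < r'$, we have $(r', s') = (r, s-1)$ when $s \ge 1$, and $(r', s') = (r-1, r-2)$ when $s = 0$. The strategy is to compare the lower bound $k_t(G) \ge q\binom{\Delta+1}{t} + \binom{r}{t} + \binom{s}{t-1}$ coming from the counterexample hypothesis against the upper bound $k_t(G') \le q\binom{\Delta+1}{t} + \binom{r'}{t} + \binom{s'}{t-1}$ coming from Theorem~\ref{main}, and to extract a contradiction from the equality $k_t(G) = k_t(G')$.

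Two Pascal identities drive the numerical comparison: $\binom{s}{t-1} - \binom{s-1}{t-1} = \binom{s-1}{t-2}$ and $\binom{r}{t} - \binom{r-1}{t} - \binom{r-2}{t-1} = \binom{r-2}{t-2}$. Each right-hand side is strictly positive exactly when $s \ge t-1$ (in the $s \ge 1$ branch) or $r \ge t$ (in the $s = 0$ branch); in those subcases the strict inequality $k_t(G) > k_t(G')$ is immediate, contradicting $k_t(G) = k_t(G')$.

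The remaining subcases --- namely $1 \le s \le t-2$, and $s = 0$ with $r < t$ --- are the delicate ones, because all the differing binomials collapse to $0$ and the raw counts yield no contradiction. Here $k_t(G') = k_t(qK_{\Delta+1} \cup L_{b-1})$, so the uniqueness clause of Theorem~\ref{main} applied to $G'$ forces $G' = qK_{\Delta+1} \cup L'$ for some $L' \in \L_{t,\Delta}(b-1)$. Since every vertex of the $K_{\Delta+1}$ blocks already has degree $\Delta$, the edge $e$ cannot be incident to them, hence $G = qK_{\Delta+1} \cup (L' + e)$. Checking against the definition of $\L_{t,\Delta}(b)$ --- the key point being that $L'$ already contains $K_r$ in the subcase $r \ge t$ (so $L' + e$ does too), while the class is unrestricted when $r < t$ --- one concludes $L' + e \in \L_{t,\Delta}(b)$, so $G$ itself is an extremal graph, contradicting $G$ being a counterexample.

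The main obstacle is precisely those boundary subcases where the $K_t$-counts collapse and the direct comparison is inconclusive; in those cases one has to invoke the uniqueness half of the inductive hypothesis and carefully align the extremal description for $G'$ (for the parameter $b-1$) with the description one wants for $G$ (for the parameter $b$), verifying that the appropriate branch of the definition of $\L_{t,\Delta}$ is preserved when the edge $e$ is put back. Once the case split is set up properly, no further ideas are required.
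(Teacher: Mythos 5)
Your proposal is correct and takes essentially the same approach as the paper: both set $G' = G - e$, use $k_t(G') = k_t(G)$, sandwich this quantity between the counterexample lower bound and the inductive upper bound for $G'$, and either reach a direct numerical contradiction or conclude that $G'$ is extremal and then show $G$ must also be extremal. The only difference is cosmetic — you split off the strict-inequality subcases ($s \ge t-1$, or $s=0$ with $r \ge t$) explicitly, while the paper folds them into the observation that the inequality chain forces equality and hence $s < t-1$ (resp.\ $r < t$).
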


\begin{proof}
	Suppose for the sake of contradiction that an edge $e$ is not in a copy of $K_t$. Consider the graph $G'=G\backslash e$. 
	\smallskip
	
	Case 1: $s>0$. Observe that by the minimality of $G$, we have that $q\binom{\D+1}{t}+\binom{r}{t}+\binom{s}{t-1}\le k_t(G) = k_t(G') \le q\binom{\D+1}{t}+\binom{r}{t}+\binom{s-1}{t-1}$. Since the right-hand-side is at most the left-hand-side, the above equation is at equality and $s<t-1$. Then, $G'$ is an extremal structure $qK_{\Delta+1}\cup L$ where $L\in \mathcal{L}_{t, \Delta}(\binom{r}{2}+s-1)$. Since $G$ has degree upper-bounded by $\Delta$, edge $e$ cannot be incident to any of the cliques $K_{\Delta+1}$. Note $L\cup\{e\}$ is a graph with maximum degree at most $\Delta$. If $t>r$, it follows that $L\cup\{e\}\in \mathcal{L}_{t, \Delta}(\binom{r}{2}+s)$ and if $t\le r$, then $L$ contains a copy of $K_r$ and $L\cup\{e\}\in \mathcal{L}_{t, \Delta}(\binom{r}{2}+s)$ as well. In both cases, $G$ is an extremal structure, a contradiction.
	\smallskip
	
	Case 2: $s=0$. Keep in mind that by Lemma \ref{lem:posr}, we have that $r-2\ge 0$. Then, $q\binom{\D+1}{t}+\binom{r}{t}\le k_t(G) = k_t(G') \le q\binom{\D+1}{t}+\binom{r-1}{t}+\binom{r-2}{t-1}$. Once again, the above equation is tight and thus $r<t$. Then, $G'$ is an extremal structure with $q$ cliques of order $\Delta+1$ and a graph $L\in \mathcal{L}_{t, \Delta}(\binom{r}{2}-1)$. Similarly, $e$ cannot be incident to any vertex in a clique of order $\Delta+1$. However, since $r<t$, $L \cup \{e\} \in \mathcal{L}_{t, \Delta}(\binom{r}{2})$. Thus $G$ is an extremal structure, a contradiction. 
	
\end{proof}

We next state an easy lemma about the binomial function which will become handy throughout the paper.

\begin{lemma} \label{easy:convex}
	Let $t$, $w$, $x$, $y$, and $z$ be non-negative integers such that $t \ge 2$, $x+w=y+z$, $x\ge y$, $x\ge z$, and $x\ge t$. Then, $\binom{x}{t}+\binom{w}{t} \ge \binom{y}{t}+\binom{z}{t}$. Moreover, the inequality is strict if $x> y$ and $x>z$. 
\end{lemma}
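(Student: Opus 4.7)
The plan is to prove this via the standard ``shifting'' (or balancing) argument: move from the pair $(y,z)$ toward the more unbalanced pair $(x,w)$ in unit steps of the form $(u,v) \mapsto (u+1, v-1)$, and track the change in the quantity $\binom{u}{t}+\binom{v}{t}$.

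First I would reduce by symmetry: since the statement is symmetric in $y,z$, assume without loss of generality that $y \ge z$. Setting $k := x - y \ge 0$, the equation $x+w=y+z$ forces $w = z-k$, so the pair $(x,w)$ is reached from $(y,z)$ by exactly $k$ unit shifts.

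Next, the main tool is the Pascal-style identity
\[
\binom{u+1}{t} + \binom{v-1}{t} - \binom{u}{t} - \binom{v}{t} \;=\; \binom{u}{t-1} - \binom{v-1}{t-1},
\]
which holds for all reals $u,v$ (it is a polynomial identity in $u,v$, so the generalized binomial version causes no trouble). Applying it along the path from $(y,z)$ to $(x,w)$ gives the telescoping sum
\[
\binom{x}{t} + \binom{w}{t} - \binom{y}{t} - \binom{z}{t} \;=\; \sum_{i=0}^{k-1}\left(\binom{y+i}{t-1} - \binom{z-i-1}{t-1}\right).
\]
Since $y \ge z$ and $i \ge 0$, one has $y+i \ge z-i-1$ (with both being integers), so each summand is non-negative, which gives the weak inequality.

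Finally, for the strict version under the hypothesis $x>y$ and $x>z$, I would examine the last term $i=k-1$, which is $\binom{x-1}{t-1} - \binom{w}{t-1}$. The strict inequalities $x>y$ and $x>z$ yield $w = y+z-x \le \min(y,z)-1 \le x-2$, so $x-1 > w$. Combined with $x-1 \ge t-1$ (from $x \ge t$), this forces $\binom{x-1}{t-1} > \binom{w}{t-1}$: if $w \ge t-1$ both sides are ordinary binomials with strictly larger argument on the left, and if $w < t-1$ the right side is $0$ while the left is at least $1$. Hence the last term of the sum is strictly positive, proving strict inequality. There is no real obstacle here; the only point requiring minor care is handling the degenerate range $w < t-1$ in the final step, which is dispatched directly.
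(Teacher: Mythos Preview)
Your proof is correct. The telescoping/shifting argument via Pascal's identity is airtight: after the WLOG reduction to $y\ge z$, the indices satisfy $z-i-1\ge w\ge 0$ for every $0\le i\le k-1$, so all terms are ordinary (integer) binomials and the monotonicity $\binom{a}{t-1}\ge\binom{b}{t-1}$ for $a\ge b\ge 0$ applies; your handling of the strict case via the last summand is also fine.

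The paper takes a different route. It rewrites the inequality as $\binom{x}{t}-\binom{y}{t}\ge \binom{z}{t}-\binom{w}{t}$ and gives a bijective interpretation: take sets $Y,Z$ with $|Y|=y$, $|Z|=z$, $|Y\cap Z|=w$, so $|Y\cup Z|=x$; the left side counts $t$-subsets of $Y\cup Z$ not entirely in $Y$, the right side counts $t$-subsets of $Z$ not entirely in $Y\cap Z$, and the latter family is contained in the former. Strictness comes from exhibiting a $t$-subset meeting both $Y\setminus Z$ and $Z\setminus Y$, using $x\ge t$. Your approach is the standard convexity/majorization argument and is more mechanical and easily generalizable (e.g.\ to any convex $f$ in place of $\binom{\cdot}{t}$); the paper's injection is a one-shot combinatorial picture that avoids any iteration but is specific to binomial coefficients. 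Either is perfectly adequate here.
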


\begin{proof}
	Rearranging the inequality, we will show that $\binom{x}{t}-\binom{y}{t}\ge \binom{z}{t}-\binom{w}{t}$. Let $Y$ and $Z$ be two groups of people such that $|Y|=y$, $|Z|=z$, and $|Y\cap Z|=w$. Then, note that $|Y\cup Z|=x$. The left-hand-side can be viewed as the number of ways of choosing $t$ people from $Y\cup Z$ such that not all of them are from $Y$. The right-hand-side can be viewed as the number of ways of choosing $t$ people from $Z$ such that not all of them are from $Y\cap Z$. Observe that any group formed from the right-hand-side is also a group from the left-hand-side, thus the right-hand-side is at most the left-hand-side. To achieve the strict inequality, assume that $x-z=|Y \setminus Z|$ and $x-y=|Z\setminus Y|$ are strictly positive. Since $x=|Y\cup Z|\ge t$ and $t \ge 2$, there exists at least one group of size $t$ that contains some people from $Y\setminus Z$ and some people from $Z\setminus Y$. This group is counted by the left-hand-side but not by the right-hand-side, proving the strict inequality in Lemma \ref{easy:convex}.
	
\end{proof}

We next show that a minimum counterexample must be connected. Many of the ingredients of our proof can be found in \cite{KR} (e.g., Lemma 11 in \cite{KR}). However, we prove this in details to make this paper self-contained.

\begin{lemma}
	If $G$ is a minimum counterexample, then $G$ is connected.
	\label{lem:connectt}
\end{lemma}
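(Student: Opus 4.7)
Suppose for contradiction that $G$ is disconnected. Decompose $G = G_1 \sqcup G_2$ by letting $G_1$ be one connected component of $G$ and $G_2$ the union of the remaining components; since every vertex of $G$ has degree at least~$1$, each $G_i$ carries at least one edge. Write $|E(G_i)| = q_i\binom{\D+1}{2}+b_i$ with $0 \le b_i < \binom{\D+1}{2}$, and let $H := qK_{\D+1} \cup L_b$ denote the target extremal graph. Since $|E(G_i)| < |E(G)|$, the minimality of $G$ lets me apply Theorem~\ref{main} to each $G_i$, yielding
\[ k_t(G_i) \le q_i\binom{\D+1}{t} + k_t(L_{b_i}), \]
with equality only if $G_i \cong q_iK_{\D+1}\cup L^{(i)}$ for some $L^{(i)} \in \L_{t,\D}(b_i)$. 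Summing gives
\[ k_t(G) \;\le\; (q_1+q_2)\binom{\D+1}{t} + k_t(L_{b_1}) + k_t(L_{b_2}). \]

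Next I would split the analysis on the size of $b_1+b_2$. If $b_1+b_2 < \binom{\D+1}{2}$, then $q = q_1+q_2$ and $b = b_1+b_2$, and Kruskal--Katona (Theorem~\ref{maxkt}) applied to the $(b_1+b_2)$-edge graph $L_{b_1}\cup L_{b_2}$ gives $k_t(L_{b_1}) + k_t(L_{b_2}) \le k_t(L_{b_1+b_2})$, so the summed bound is at most $k_t(H)$. Otherwise $q = q_1+q_2+1$ and $b = b_1+b_2-\binom{\D+1}{2}$, and the target $k_t(G) \le k_t(H)$ reduces to
\[ k_t(L_{b_1}\cup L_{b_2}) \;\le\; k_t(K_{\D+1}\cup L_b). \]
When $q_1+q_2 \ge 1$, I would deduce this from the inductive hypothesis applied to $L_{b_1}\cup L_{b_2}$, which has $b_1+b_2 < |E(G)|$ edges and maximum degree at most~$\D$, and whose extremal graph for this edge count is exactly $K_{\D+1}\cup L_b$. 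The remaining boundary subcase $q_1=q_2=0$ demands a direct combinatorial verification: writing $b_i = \binom{r_i}{2}+s_i$ with $r_i \le \D$, the inequality becomes a bound on sums of binomials that I would establish by iteratively applying Lemma~\ref{easy:convex}, transferring weight from the pair $(r_1, r_2)$ toward $(\D+1, r')$ while tracking the residual $s$-terms.

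Once $k_t(G) \le k_t(H)$ is secured, the counterexample hypothesis $k_t(G) \ge k_t(H)$ forces equality throughout, so each $G_i$ must itself be extremal with $L^{(i)} \in \L_{t,\D}(b_i)$. To close the argument I would use the equality characterization in Theorem~\ref{maxkt} together with the three-way definition of $\L_{t,\D}$: in the subcase $s \ge t-1$ (where the colex graph is the unique extremizer) the disconnectedness of $L^{(1)}\cup L^{(2)}$ immediately contradicts the required isomorphism with $L_{b_1+b_2}$, while in the remaining subcases $(r \ge t,\ s < t-1)$ and $(r < t)$ the graph $G$ itself turns out to belong to $\{qK_{\D+1}\cup L : L \in \L_{t,\D}(b)\}$, contradicting the counterexample status of $G$. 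The principal obstacle I foresee is the direct inequality in the boundary subcase $q_1=q_2=0$ of the second case, where induction is unavailable and the coarser bound $k_t(L_{b_1})+k_t(L_{b_2}) \le k_t(L_{b_1+b_2})$ is too weak because $L_{b_1+b_2}$ can have maximum degree exceeding~$\D$; the proof there must exploit the constraint $r_i \le \D$ through a careful subcase analysis.
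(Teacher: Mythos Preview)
Your outline is sound and, in its first half, essentially parallel to the paper's argument: both reduce to the situation where the two pieces each have fewer than $\binom{\D+1}{2}$ edges and carry no $K_{\D+1}$. Your inductive bound on each $G_i$ together with Kruskal--Katona handles the subcase $b_1+b_2<\binom{\D+1}{2}$ correctly, and your observation that for $q_1+q_2\ge 1$ one may apply the induction hypothesis to the auxiliary graph $L_{b_1}\cup L_{b_2}$ (which has strictly fewer edges than $G$ and maximum degree at most~$\D$) is a nice shortcut; the equality case there is indeed strict because $L_{b_1}\cup L_{b_2}$ contains no $K_{\D+1}$.

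However, the ``principal obstacle'' you flag is not a boundary subcase: it is the entire content of the lemma. When $q_1=q_2=0$ and $b_1+b_2\ge\binom{\D+1}{2}$, one has $q=1$ and $|E(L_{b_1}\cup L_{b_2})|=|E(G)|$, so no induction is available, and you are left needing the very inequality $k_t(L_{b_1})+k_t(L_{b_2})\le\binom{\D+1}{t}+k_t(L_b)$ whose general form is Theorem~\ref{main} itself. The paper does not attempt to prove this inequality directly. Instead it exploits an extra hypothesis you never use, namely that the minimum counterexample $G$ may be taken to \emph{maximize} $k_t$ among all $m$-edge graphs of maximum degree at most $\D$. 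This lets the paper first replace each component by an extremal structure, then invoke Lemma~\ref{alledget} (every edge lies in a $K_t$) to force $r_i\ge t$ and $s_i\in\{0\}\cup[t-1,r_i)$, so that each $G_i$ is a genuine colex graph of restricted shape. Only then does a single edge-transfer step (four cases on whether $s_1,s_2$ vanish) produce a graph $G_1'\cup G_2'$ with strictly more $K_t$'s, contradicting maximality. Your proposed route of ``iteratively applying Lemma~\ref{easy:convex}'' would have to replicate this four-case analysis without the simplification afforded by Lemma~\ref{alledget}, leaving you to handle all values of $s_1,s_2$; this is doable but considerably messier, and in any event is the work you have not done. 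In short, your reduction is correct, but what remains is precisely the proof.
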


\begin{proof}
	Suppose for the sake of contradiction that $G$ is not connected. First we show that $G$ does not contain a clique of order $\Delta+1$. Suppose $G$ does contain one such clique, note that due to the maximum degree condition, the clique is disjoint from the rest of the graph. Then, by removing this clique, we obtain a graph $G'$ with $(q-1)\binom{\Delta+1}{2}+\binom{r}{2}+s$ edges with at least $(q-1)\binom{\Delta+1}{t}+\binom{r}{t}+\binom{s}{t-1}$ number of $K_t$'s. Since $G'$ is not a counterexample, $G'$ has exactly the optimal number of $K_t$'s and thus is one of the extremal structures. However, adding the clique back implies that $G$ is also one of the extremal structures, a contradiction. 
	
	Now, suppose $G$ contains a proper subgraph $H$ that is a union of connected components of $G$ where $|E(H)| \ge \binom{\Delta+1}{2}$. Since $H$ is not a counterexample to Theorem \ref{main}, either $H$ contains strictly fewer $K_t$'s than an extremal structure with the same number of edges, or $H$ is an extremal structure. In the first case, replacing $H$ with one of the extremal structures results in a graph that strictly increases the number of $K_t$'s in $G$ while maintaining the same number of edges, creating a worse minimum counterexample, a contradiction. In the latter case, $H$ contains at least one copy of $K_{\Delta+1}$, contradicting our previous claim. Thus, we may assume that all proper subgraphs that are a union of connected components of $G$ have strictly fewer than $\binom{\Delta+1}{2}$ edges. 
	
	Let $G_1$ be a connected component of $G$ and $G_2=G\backslash G_1$. Note that if one of them is not an extremal structure, then by replacing it with an extremal structure, one strictly increases the number of $K_t$'s of $G$, achieving a similar contradiction as before. Since neither are counterexamples nor contain at least $\binom{\Delta+1}{2}$ edges, we may assume that $G_i\in \mathcal{L}_{t, \Delta}(\binom{r_i}{2}+s_i)$ where $|E(G_i)|=\binom{r_i}{2}+s_i$ and $0\le s_i< r_i\le \D$ for $i=1, 2$. Observe that if $0<s_i<t-1$, some edges in $G_i$ will not be part of any $K_t$'s, contradicting Lemma \ref{alledget}. Similar contradiction is achieved if $r_i<t$. Thus we may assume that $r_i\ge t$ and either $s_i=0$ or $s_i\ge t-1$ for $i=1, 2$. Observe that in either case, $G_i$ is the colex graph. Without loss of generality, we can assume that $r_1 \ge r_2$. Now, depending on the values of $s_1$ and $s_2$, we will move a certain amount of edges from $G_2$ to $G_1$ and obtain a graph with strictly more $K_t$'s than before, achieving a contradiction.
	\smallskip
	
	Case 1: $s_1, s_2> 0$. Let $s'=\min\{s_2, r_1-s_1\}$. Note that $s' \ge 1$. Let $G_1'$ be the colex graph with $|E(G_1)|+s'=\binom{r_1}{2}+s_1+s'$ edges and let $G_2'$ be the colex graph with $|E(G_2))|-s'=\binom{r_2}{2}+s_2-s'$ edges. Essentially, we are moving $s'$ edges from $G_2$ to $G_1$. The value $s'$ is chosen such that this process is equivalent to moving one edge at a time from $G_2$ to $G_1$ and stopping as soon as one of $G_1$ and $G_2$ becomes a clique. Doing so keeps the calculation of $k_t(G_1')$ and $k_t(G_2')$ as simple as possible. Note that $s_1, s_2 > 0$, and $s' \ge 1$. Hence, we have the conditions that $s_1, s_2 < s_1+s'$ and $s_1+s' \ge t-1$. Then, it follows from Lemma \ref{easy:convex} that
	
	\begin{align*}
		k_t(G_1\cup G_2) &= \binom{r_1}{t}+\binom{s_1}{t-1}+\binom{r_2}{t}+\binom{s_2}{t-1}\\
		&< \binom{r_1}{t} + \binom{s_1+s'}{t-1} + \binom{r_2}{t} + \binom{s_2-s'}{t-1}\\
		&= k_t(G_1'\cup G_2'), 
	\end{align*} 
	a contradiction.
	\smallskip
	
	Case 2: $s_1 = 0$, $s_2> 0$. Let $s'=\min\{r_2 -1, r_1-s_2\}$. Recall that $r_2\ge t\ge 2$ so after deleting $s_2$ edges from $G_2$, there are still more edges one can remove. Then, let $G_1'$ be the colex graph with $|E(G_1)|+s_2+s'=\binom{r_1}{2}+s_2+s'$ edges and let $G_2'$ be the colex graph with $|E(G_2)|-s_2-s'=\binom{r_2-1}{2} +r_2-1-s'$ edges (moving $s_2+s'$ edges from $G_2$ to $G_1$). Since $s'\ge r_2-1> s_2-1 \ge t-2> 0$, we have that $s_2+s'\ge t-1$. Furthermore, since $r_1\ge r_2$, we have that $s_2+s' = \min\{r_2-1+s_2, r_1\}> r_2-1, s_2$, Then, by Lemma \ref{easy:convex},
	
	\begin{align*}
		k_t(G_1\cup G_2) &= \binom{r_1}{t}+\binom{r_2-1}{t}+\binom{r_2-1}{t-1}+\binom{s_2}{t-1}\\
		&< \binom{r_1}{t}  +\binom{s_2+s'}{t-1} + \binom{r_2-1}{t} + \binom{r_2-1-s'}{t-1}\\
		&= k_t(G_1'\cup G_2'),
	\end{align*}
	a contradiction. 
	\smallskip
	
	Case 3: $s_1>0$, $s_2 = 0$. Let $s'=\min\{r_1-s_1, r_2-1\}$. Keep in mind that $r_1>s_1\ge t-1\ge 1$. Let $G_1'$ be the colex graph with $|E(G_1)|+s'=\binom{r_1}{2}+s_1+s'$ edges and let $G_2'$ be the colex graph with $|E(G_2)|-s'=\binom{r_2-1}{2}+r_2-1-s'$ edges (moving $s'$ edges from $G_2$ to $G_1$ while ensuring not adding more than $r_1-s_1$ many edges to $G_1$). Note that $s'\ge 1$ and $s_1+s'\ge t-1$. Since $r_1\ge r_2$, we have that $s_1+s'=\min\{r_1, r_2-1+s_1\}>r_2-1, s_1$. Then, it follows from Lemma \ref{easy:convex} that
	
	\begin{align*}
		k_t(G_1\cup G_2) &= \binom{r_1}{t}+\binom{s_1}{t-1}+\binom{r_2-1}{t}+\binom{r_2-1}{t-1}\\
		&< \binom{r_1}{t}+\binom{s_1+s'}{t-1} +\binom{r_2-1}{t}+\binom{r_2-1-s'}{t-1}\\
		&= k_t(G_1'\cup G_2'),
	\end{align*}
	a contradiction.
	\smallskip
	
	Case 4: $s_1=s_2=0$. Keep in mind that $r_2\ge t\ge 3$. Let $G_1'$ be the colex graph with $|E(G_1)|+r_2=\binom{r_1}{2}+r_2$ edges and let $G_2'$ be the colex graph with $|E(G_2)|-r_2=\binom{r_2-2}{2}+r_2-3$ edges. Then by Lemma \ref{easy:convex}, 
	
	\begin{align*}
		k_t(G_1\cup G_2) &= \binom{r_1}{t}+\binom{r_2-2}{t}+\binom{r_2-1}{t-1}+\binom{r_2-2}{t-1}\\
		&< \binom{r_1}{t}+\binom{r_2}{t-1} +\binom{r_2-2}{t}+\binom{r_2-3}{t-1}\\
		&= k_t(G_1'\cup G_2'),
	\end{align*}
	a contradiction.
	
\end{proof}

\begin{lemma}
	\label{lem:largen}
	If $G$ is a minimum counterexample, then $G$ has at least $q(\D+1) + (r+1)$ vertices.
\end{lemma}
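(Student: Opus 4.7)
The plan is to argue by contradiction. Suppose $|V(G)|\le q(\Delta+1)+r$, and write $|V(G)| = q'(\Delta+1)+r'$ with $0\le r'\le \Delta$. Since $r\le\Delta$, this forces $q'\le q$, and if $q'=q$ then also $r'\le r$. Applying Theorem~\ref{Z} gives $k_t(G)\le q'\binom{\Delta+1}{t}+\binom{r'}{t}$; combining with the counterexample lower bound $k_t(G)\ge q\binom{\Delta+1}{t}+\binom{r}{t}+\binom{s}{t-1}$ yields
\[
q'\binom{\Delta+1}{t}+\binom{r'}{t} \ge q\binom{\Delta+1}{t}+\binom{r}{t}+\binom{s}{t-1}.
\]

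First I would dispose of the easy cases. If $q'<q$, the left side is at most $(q-1)\binom{\Delta+1}{t}+\binom{\Delta}{t}<q\binom{\Delta+1}{t}$, a contradiction. So $q'=q$ and $r'\le r$. If moreover $r\ge t$, then the displayed inequality reduces to $\binom{r'}{t}\ge\binom{r}{t}+\binom{s}{t-1}$, forcing $r'=r$ and $s<t-1$; hence $k_t(G)=q\binom{\Delta+1}{t}+\binom{r}{t}$ attains the bound of Theorem~\ref{Z}. The uniqueness clause of that theorem (applicable since $r'=r\ge t$) then forces $G\cong qK_{\Delta+1}\cup K_r$, which is disconnected when $s=0$ (contradicting Lemma~\ref{lem:connectt}) or has the wrong edge count when $s\ge 1$.

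The main case is $q'=q$ and $r'\le r<t$. Now $\binom{r'}{t}=\binom{r}{t}=0$, so the displayed inequality gives $s<t-1$ and pins down $k_t(G)=q\binom{\Delta+1}{t}$. The boundary sub-case $r'=0$, $q=1$ is handled directly: $|V(G)|=\Delta+1$ forces $G=K_{\Delta+1}$ by Theorem~\ref{Z}, whence $|E(G)|=\binom{\Delta+1}{2}$ and $\binom{r}{2}+s=0$, contradicting $r\ge 2$ from Lemma~\ref{lem:posr}. Otherwise $|V(G)|>\Delta+1$, so by connectedness and the maximum-degree bound $G$ has no $K_{\Delta+1}$ (which would be an isolated component). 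I would then double-count $\sum_v k_{t-1}(G[N(v)]) = tk_t(G) = q(\Delta+1)\binom{\Delta}{t-1}$; since $G$ is $K_{\Delta+1}$-free, every vertex satisfies $k_{t-1}(G[N(v)])\le\binom{\Delta}{t-1}-1$, and a vertex of degree less than $\Delta$ satisfies the strictly smaller bound $k_{t-1}(G[N(v)])\le\binom{\Delta-1}{t-1}$. Summing already yields $q(\Delta+1)\le r'(\binom{\Delta}{t-1}-1)$, which contradicts $q\ge 1$ when $t=\Delta+1$. The hard part is closing the sub-case $t\le\Delta$: there I would refine the summation by separating vertices of degree exactly $\Delta$ from those of lower degree and combine the sharpened inequality with the handshake identity $2|E(G)|=q\Delta(\Delta+1)+r(r-1)+2s$, which upper-bounds the number of low-degree vertices by $\Delta r'-r(r-1)-2s$. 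The small edge surplus $\binom{r}{2}+s$, constrained by $r<t$ and $s<t-1$, makes the clique-count deficit forced by $K_{\Delta+1}$-freeness incompatible with the degree deficit forced by the edge count, giving the final contradiction.
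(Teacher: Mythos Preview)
Your treatment of $q'<q$ and of $q'=q$ with $r\ge t$ is correct and essentially the same as the paper's. The real divergence is the case $q'=q$, $r'\le r<t$, which you (rightly) flag as delicate and which the paper handles much more tersely.

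The paper argues that the bound $k_t(G)\le q'\binom{\Delta+1}{t}+\binom{r'}{t}$ from Theorem~\ref{Z} is strict against $T_t$ unless $q'=q$ and $r'=r$, and in that lone equality case appeals to the uniqueness clause of Theorem~\ref{Z} to identify $G$ with $qK_{\Delta+1}\cup K_r$, an extremal structure (the edge count then forces $s=0$). You are right that, as literally stated, the uniqueness in Theorem~\ref{Z} is only asserted for $t\le r$, so the case $r<t$ warrants a word. But the fix stays on the paper's short path rather than yours: any equality instance of Theorem~\ref{Z} on $q(\Delta+1)+r'$ vertices forces $G$ to contain $q$ vertex-disjoint copies of $K_{\Delta+1}$, and the remaining $r'\le r$ vertices must then carry all $\binom{r}{2}+s$ leftover edges, which is impossible unless $r'=r$ and $s=0$, again yielding $G=qK_{\Delta+1}\cup K_r$.

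Your alternative route via $\sum_v k_{t-1}(G[N(v)])=q(\Delta+1)\binom{\Delta}{t-1}$ is a substantial detour and, as you concede, only a sketch for $t\le\Delta$. More seriously, the refinement you outline points the wrong way. The handshake identity gives the \emph{total} degree deficit $\sum_v(\Delta-d(v))=r'\Delta-r(r-1)-2s$, which yields only an \emph{upper} bound on the number of vertices of degree $<\Delta$; but to tighten the upper bound on $\sum_v k_{t-1}(G[N(v)])$ you need a \emph{lower} bound on how many (or how badly) vertices fall short of degree $\Delta$, since those are the vertices contributing less. Making this line work would require the finer per-vertex estimate $k_{t-1}(G[N(v)])\le\binom{d(v)}{t-1}$ together with a convexity argument of the type in Lemma~\ref{convex} --- in effect re-deriving a piece of Theorem~\ref{Z} --- whereas the paper simply uses Theorem~\ref{Z} as a black box and is done in a few lines.
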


\begin{proof}
	Suppose for the sake of contradiction that $G$ has $q'(\Delta+1)+r'<q(\Delta+1)+r+1$ vertices, where $r' \le \D$. It follows from Theorem \ref{Z} that $G$ has at most $q'\binom{\Delta+1}{t}+\binom{r'}{t}$ copies of $K_t$. If $s\ge t-1$, the number of $K_t$'s is strictly less than $k_t(qK_{\D+1} \cup L_{\binom{r}{2} + s})$, a contradiction. In fact, $k_t(G)$ is strictly less than $T_t := q\binom{\D+1}{t} + \binom{r}{t} + \binom{s}{t-1}$ unless $q'=q$, $r'=r$, and $0\le s\le t-2$. However, if $q'=q$, $r'=r$, and $0\le s\le t-2$, by Theorem \ref{Z}, $qK_{\Delta+1}\cup K_r$ is the only structure that can achieve $T_t$ many $K_t$'s, implying that $G$ is one of the extremal graphs, a contradiction. 
	
\end{proof}

\section{Many triangles with few edges}

In this section, we focus specifically on the case when $t=3$ in order to better demonstrate the ideas and techniques we use to prove Theorem \ref{main}. We continue with the assumption that $G$ is a minimum counterexample where $|E(G)|=q\binom{\Delta+1}{2}+\binom{r}{2}+s$ with $0 \le s < r \le \D$, $q\ge 1$, and $G$ is not one of the extremal structures described in Theorem \ref{main} but $G$ has at least $T_3:= q\binom{\Delta+1}{3}+\binom{r}{3}+\binom{s}{2}$ many triangles. 

Conceptually, the strategy is as follows. For a vertex $v$, the number of triangles it partakes in is at most $\binom{d(v)}{2}$, where $d(v)$ denotes the degree of $v$. However, every missing edge in the subgraph induced by its neighborhood also reduces this upper-bound on the number of triangles $v$ can be part of. Thus, we would like to upper-bound $\sum_{v \in V(G)} \binom{d(v)}{2}$ and lower-bound the total number of missing edges in all the neighborhoods. Note that every missing edge in a neighborhood creates an induced copy of $K_{1, 2}$. The next two lemmas are used to lower-bound such objects. 

\begin{lemma}
	\label{lem:cut}
	If $G$ is a minimum counterexample, then $G$ does not contain an edge cut of size at most $r-1$. In particular, $G$ has minimum degree at least $r$.
\end{lemma}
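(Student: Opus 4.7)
Plan: I argue by contradiction. Suppose $G$ admits an edge cut $F$ with $|F| = f \le r - 1$, separating $V(G)$ into nonempty sides $A$ and $B$ (nonemptiness follows from the connectedness of $G$ given by Lemma~\ref{lem:connectt}). Let $T^F$ denote the number of triangles of $G$ using at least one $F$-edge. Any such triangle has two vertices on one side of the cut and one on the other, so it contains exactly two $F$-edges meeting at an ``apex'' vertex on the minority side. Since each unordered pair of $F$-edges sharing a common endpoint extends to at most one triangle (via the unique third edge between their other endpoints), I obtain $T^F \le \binom{f}{2}$.

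By the minimality of $G$ applied to $G - F$ (which has $m - f < m$ edges and maximum degree at most $\D$), one has $k_3(G - F) \le q'\binom{\D+1}{3} + \binom{r'}{3} + \binom{s'}{2}$, where $m - f = q'\binom{\D+1}{2} + \binom{r'}{2} + s'$ with $0 \le s' < r'$. Combining with the previous bound gives $k_3(G) \le q'\binom{\D+1}{3} + \binom{r'}{3} + \binom{s'}{2} + \binom{f}{2}$. The arithmetic heart of the proof is to show this is at most the target value $q\binom{\D+1}{3} + \binom{r}{3} + \binom{s}{2}$ for $G$, i.e., that the target gain from $m - f$ to $m$ is at least $\binom{f}{2}$. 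One can express this gain as a sum of per-edge ``triangle gains'' (the gain of the $b$-th added edge equals the $s$-value of the state at $b - 1$, producing a sawtooth pattern $0, 1, \ldots$ within each successive block of the extremal construction). A short case split on $f \le s$ versus $f > s$, using $f \le r - 1$ and Lemma~\ref{easy:convex}, then establishes the inequality, with equality exactly when $f = s$ or $f = r - 1$.

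If the inequality is strict, $k_3(G)$ is strictly less than its target, contradicting the counterexample assumption. Otherwise all estimates are tight: $T^F = \binom{f}{2}$ forces all $F$-edges to share a common apex vertex $v$ (WLOG $v \in A$) whose other endpoints form a $K_f$ in $G[B]$, while equality in the minimality bound forces $G - F \cong qK_{\D+1} \cup L'$ for some $L' \in \mathcal{L}_{3, \D}(b - f)$. Since any vertex of a $K_{\D+1}$ component already has degree $\D$, the $F$-edges at $v$ cannot touch these cliques; consequently both $v$ and every $b_i$ lie in $L'$. Hence the $q \ge 1$ copies of $K_{\D+1}$ are untouched upon reinserting $F$, so $G$ remains disconnected, contradicting Lemma~\ref{lem:connectt}. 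This exhausts both branches; the main obstacle is the arithmetic inequality together with the equality-case structural deduction. Finally, the ``in particular'' statement is immediate: if some vertex $v$ had degree at most $r - 1$, then $\delta(v)$ would be an edge cut of size at most $r - 1$, violating the first part.
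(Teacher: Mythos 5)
Your proposal is correct and follows essentially the same approach as the paper's proof. You bound the number of triangles meeting the cut by $\binom{f}{2}$ by noting each such triangle uses exactly two cut edges and each pair of cut edges yields at most one triangle; you then compare $k_3(G) \le k_3(G-F) + \binom{f}{2}$ against the target $T_3$ via the same two-case arithmetic (on $f\le s$ versus $f>s$, with Lemma~\ref{easy:convex}); and in the equality case you deduce that $G-F$ is extremal, hence contains a $K_{\Delta+1}$ component that no $F$-edge can touch by the degree bound, so $G$ stays disconnected, contradicting Lemma~\ref{lem:connectt}. Two minor remarks: the ``sawtooth'' description of the target gain and the case split are left as assertions rather than written out, and the deduction that all $F$-edges share a common apex when $T^F=\binom{f}{2}$ is superfluous --- the contradiction only needs that $G-F$ is extremal and that cut edges cannot attach to any $K_{\Delta+1}$ vertex --- but neither affects correctness.
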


\begin{proof}

Let $B$ be a smallest edge cut in $G$, and suppose for contradiction, that $B$ has size $\b \le r-1$. Note that by Lemma \ref{lem:connectt}, we have that $\b \ge 1$. Consider the graph $G'=G\backslash B$. Then, $|E(G')|= q \binom{\D+1}{2} + \binom{r}{2} + s - \b$. Note that any triangle involving an edge of $B$ contains exactly two edges of $B$ and any pair of edges in $B$ belongs in at most one distinct triangle of $G$. Then, it follows that the number of triangles involving an edge of $B$ is at most $\binom{\b}{2}$. Since $G'$ is not a counterexample to Theorem \ref{main}, we can upper-bound the number of triangles in $G'$ based on the value of $\b$. We will also use Lemma \ref{easy:convex}. 
\smallskip

Case 1: $\b \le s$. We have the following:
\begin{align*}
k_3(G) &\ge q \binom{\D+1}{3} + \binom{r}{3} + \binom{s}{2}\\
&\ge q \binom{\D+1}{3} + \binom{r}{3} + \binom{s-\b}{2} + \binom{\b}{2}\\
&\ge k_3(G') + \binom{\b}{2} \ge k_3(G).
\end{align*}

Then, the equation is tight and $G'$ achieves the maximum possible number of triangles. Therefore $G'$ is an extremal structure. Since $q\ge 1$, $G'$ contains a clique of size $\Delta+1$. Due to the maximum degree condition, this clique is disjoint in $G$, contradicting Lemma \ref{lem:connectt}. 
\smallskip

Case 2: $\b > s$. Then, by Lemma \ref{easy:convex}:
\begin{align*}
k_3(G) &\ge q \binom{\D+1}{3} + \binom{r}{3} + \binom{s}{2}\\
&\ge q \binom{\D+1}{3} + \binom{r-1}{3} + \binom{r-1 - (\b-s)}{2} + \binom{\b}{2}\\
&\ge k_3(G') + \binom{\b}{2} \ge k_3(G).
\end{align*}

Similarly, the equation is tight. Then, $G'$ is an extremal structure and contains a clique $K_{\Delta+1}$. Then $G$ also contains this clique, contradicting Lemma \ref{lem:connectt}. 

\end{proof}

\begin{lemma} \label{K12}
If $G$ is a minimum counterexample, then $G$ contains at least $r^2$ copies of induced $K_{1,2}$.
\end{lemma}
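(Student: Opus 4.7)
My plan rests on a double-counting identity expressing $2I(G)$ (where $I(G)$ denotes the number of induced $K_{1,2}$s) as a sum of edge-cut sizes, one per vertex. Once this identity is in place, each summand is at least $r$ by Lemma \ref{lem:cut}, and summing yields the claimed bound.

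To derive the identity, for each vertex $v$ set $B_v := V(G) \setminus (N(v) \cup \{v\})$ and $i(v) := \binom{d(v)}{2} - e(v)$, where $e(v)$ is the number of edges in $G[N(v)]$, so that $I(G) = \sum_v i(v)$. For each $u \in N(v)$, partitioning the neighbors of $u$ into $\{v\}$, those in $N(v)$, and those in $B_v$ gives $d(u) = 1 + |N(u)\cap N(v)| + |N(u)\cap B_v|$. Summing over $u \in N(v)$ and recognizing $\sum_{u\in N(v)}|N(u)\cap N(v)| = 2e(v)$ and $\sum_{u\in N(v)}|N(u)\cap B_v| = |E(N(v), B_v)|$, a short manipulation yields
\begin{equation*}
2 i(v) \;=\; \sum_{u \in N(v)} \bigl(d(v) - d(u)\bigr) \;+\; |E(N(v), B_v)|.
\end{equation*}
Summing over all $v$, the first term vanishes because each edge $uv$ contributes $(d(v)-d(u))+(d(u)-d(v))=0$, leaving the identity
\begin{equation*}
2 I(G) \;=\; \sum_{v \in V(G)} |E(N(v), B_v)|.
\end{equation*}

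Next, I would check that $B_v$ is always nonempty. By Lemma \ref{lem:largen} and $q \ge 1$, one has $|B_v| = |V(G)| - d(v) - 1 \ge (q(\Delta+1) + r + 1) - \Delta - 1 \ge r+1 > 0$. Hence $(N(v)\cup\{v\}, B_v)$ is a proper bipartition of $V(G)$; since $v$ has no neighbors in $B_v$ by construction, the associated edge cut has size exactly $|E(N(v), B_v)|$, which is at least $r$ by Lemma \ref{lem:cut}. Summing these lower bounds gives
\begin{equation*}
2 I(G) \;\ge\; r \cdot |V(G)| \;\ge\; r\bigl(q(\Delta+1) + r + 1\bigr) \;\ge\; r(2r + 2) \;=\; 2r^2 + 2r,
\end{equation*}
using $\Delta \ge r$ and $q \ge 1$, so $I(G) \ge r^2 + r > r^2$.

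The crucial step is recognizing the identity $2I(G) = \sum_v |E(N(v), B_v)|$; once it is in place, the bound follows immediately from the previously established structural Lemmas \ref{lem:cut} and \ref{lem:largen}, and I do not anticipate any further obstacle.
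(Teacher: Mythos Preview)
Your proof is correct and follows essentially the same route as the paper: establish the identity $2I(G)=\sum_v |E(N(v),B_v)|$, verify each $B_v$ is nonempty via Lemma~\ref{lem:largen}, and then bound each cut below by $r$ using Lemma~\ref{lem:cut}. The only cosmetic difference is that the paper obtains the identity by a direct bijection (each cut edge $vw$ with $v\in N(u)$, $w\in B_u$ yields an induced $K_{1,2}$ on $\{u,v,w\}$, and every induced $K_{1,2}$ arises exactly twice), whereas you derive it via the algebraic manipulation through $\sum_{u\in N(v)}(d(v)-d(u))$; both reach the same place.
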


\begin{proof}
	By Lemmas \ref{lem:posr} and \ref{lem:largen} and using $q \ge 1$, the number of vertices in $G$ is at least $\Delta + 2$. Thus for any vertex $u$, there exists at least one vertex $v\notin \{u\cup N(u)\}$, where $N(u)$ denotes the set of neighbors of $u$. Then, given a vertex $u$, let $B_u$ denote the set of edges in the cut between $u\cup N(u)$ and the rest of the graph $G$. Note that for every edge $vw\in B_u$, the vertices $u$, $v$, and $w$ induce a copy of $K_{1, 2}$. Summing over all the vertices $\sum_{u\in V(G)}|B_u|$ counts every induced copy of $K_{1, 2}$ exactly twice. Then, it follows from Lemma \ref{lem:largen} and Lemma \ref{lem:cut} that the number of induced copies of $K_{1, 2}$ is at least $\frac{1}{2} \sum_{u\in V(G)}|B_u|\ge \frac{1}{2} \cdot 2r \cdot r = r^2$. 
\end{proof}

\begin{remark*}
	Note that the above proof works verbatim for the general $t\ge 3$ case as long as a version of Lemma \ref{lem:cut} is also true for the general case. Keep this in mind for Section 4 since we will use Lemma \ref{K12} directly without proof after having proven a version of Lemma \ref{lem:cut} for the general case.   
\end{remark*}

Next, we prove a technical lemma about convex functions, which plays a central role in our proof of Theorem \ref{main}. In particular it is used to upper-bound $\sum_{v \in V(G)} \binom{d(v)}{3}$ and $\sum_{v \in V(G)} \binom{d(v)}{t}$ for the general case.

\begin{lemma} \label{convex}
	Let $f : \N \rightarrow \N$ be a convex function such that $f(0) = 0$. Let $n$, $D$, $r$, and $\D$ be positive integers where $r < \D$ and $nr \le D \le n \D$. Then, the maximum value of $\sum_{i=1}^k f(x_i)$ under the constraints $k \ge n$, $r \le x_i \le \D$, and $\sum_{i=1}^k x_i = D$ can be achieved by a solution where $k = n$ and all the $x_i$'s except for possibly one is either $r$ or $\D$. Moreover, $\sum_{i=1}^k f(x_i) \le a f(\D) + (n-a) f(r)$, for the real number $a$ that satisfies $a \D + (n-a) r = D$.
\end{lemma}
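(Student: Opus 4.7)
The plan is to prove the two parts of the lemma separately. For the ``moreover'' bound, I use the chord inequality from convexity: for $x\in[r,\Delta]$, $f(x)\le g(x):=f(r)+\alpha(x-r)$ with $\alpha:=(f(\Delta)-f(r))/(\Delta-r)$. Summing and applying $\sum x_i=D$ gives $\sum f(x_i)\le \alpha D+k\beta$ where $\beta:=f(r)-\alpha r$. Convexity of $f$ together with $f(0)=0$ implies $f(r)/r\le f(\Delta)/\Delta$, i.e., $\beta\le 0$. Since $k\ge n$, the quantity $\alpha D+k\beta$ is maximized at $k=n$, yielding $\sum f(x_i)\le \alpha D+n\beta = af(\Delta)+(n-a)f(r)$ for the prescribed real number $a$.

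For the structural claim, I use an exchange and merging argument. Take a maximizer $(x_1,\ldots,x_k)$ with $k$ minimum among all maximizers. By convexity, the swap $(x_i,x_j)\mapsto(x_i-1,x_j+1)$ whenever $r<x_i\le x_j<\Delta$ does not decrease $\sum f(x_i)$, so I can assume at most one $x_i$ lies in the open interval $(r,\Delta)$. To conclude $k=n$, suppose $k>n$. Convexity with $f(0)=0$ yields the superadditivity $f(x+y)\ge f(x)+f(y)$ for non-negative integers $x,y$, so whenever $x_i+x_j\le\Delta$, merging them into the single value $x_i+x_j$ reduces $k$ by one without decreasing the sum, contradicting minimality. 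Such a merge is available whenever some pair sums to at most $\Delta$, which (given $x_i\ge r$) rules out all cases except $2r>\Delta$.

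The principal obstacle is thus the regime $2r>\Delta$, where no pairwise merge is feasible. In this regime, each $x_i>\Delta/2$ together with $\sum x_i\le n\Delta$ forces $k<2n$, so the configuration has only a few values; I reduce $k$ by a three-to-two replacement such as $(r,r,y)\mapsto(\Delta,\,2r+y-\Delta)$, or $(r,r,r)\mapsto(\Delta,\,3r-\Delta)$ in the absence of an intermediate, feasible whenever the new value lies in $[r,\Delta]$. A short convexity computation shows the resulting change in $\sum f$ decomposes as $|\beta|$ plus a correction controlled by the chord deficits $g-f$ at the relevant points, and this total is non-negative in the regime $2r>\Delta$. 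For still more degenerate sub-cases, analogous $(m+1)$-to-$m$ replacements achieve the same effect, and iterating drives $k$ down to $n$, producing a maximizer of the required form.
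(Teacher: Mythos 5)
Your proof of the ``moreover'' bound is correct and is in fact a genuinely different (and arguably cleaner) route than the paper's. The paper first establishes the structural form of a maximizer and only then applies Jensen to the one intermediate value; you instead bound $f$ by the chord $g(x)=f(r)+\alpha(x-r)$ on $[r,\Delta]$, observe $\sum g(x_i)=\alpha D+k\beta$ with $\beta=f(r)-\alpha r\le 0$ (from $f(r)/r\le f(\Delta)/\Delta$), and use $k\ge n$. This bypasses the structural claim entirely, which is appealing since the moreover bound is the part actually used in the rest of the paper. The algebraic identification $\alpha D + n\beta = af(\Delta)+(n-a)f(r)$ checks out.

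Your proof of the structural claim, however, has a real gap. After the swap step you are left with $p$ copies of $\Delta$, at most one intermediate value, and $q$ copies of $r$, and you want to drop $k$ to $n$. Your merge $(x_i,x_j)\mapsto x_i+x_j$ requires $x_i+x_j\le\Delta$ and therefore fails exactly when $2r>\Delta$ (a perfectly allowed regime: e.g.\ $r=3$, $\Delta=4$). The proposed $3$-to-$2$ replacement $(r,r,y)\mapsto(\Delta,2r+y-\Delta)$ needs $2r+y-\Delta\le\Delta$, i.e.\ $y\le 2(\Delta-r)$, and $(r,r,r)\mapsto(\Delta,3r-\Delta)$ needs $3r\le 2\Delta$; both can fail when $r$ is close to $\Delta$. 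Your fallback---``analogous $(m+1)$-to-$m$ replacements'' with ``a short convexity computation''---is not carried out, and it is not clear how you guarantee the existence of a feasible such replacement for an arbitrary configuration, nor that iterating it terminates at $k=n$. The paper avoids this difficulty entirely by relaxing the constraints to $0<x_i\le\Delta$ for indices $i>n$ (keeping $r\le x_i\le\Delta$ only for $i\le n$), which makes a unit transfer from the smallest coordinate to coordinate $n$ always feasible (dropping a coordinate that hits $0$, using $f(0)=0$); a secondary potential $\sum_{i=1}^n x_i$ then certifies that a maximizer with $k=n$ exists. If you want to keep your self-contained approach, you need either to adopt a relaxation of this kind, or to give a complete argument that in the $2r>\Delta$ regime a feasible $(m+1)$-to-$m$ replacement always exists and terminates; as written, that step is a genuine hole.
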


For our application, the $x_i$'s represent the degree sequence of our graph $G$, $f$ is the binomial function $\binom{x}{t}$, $r$ and $\Delta$ are respectively the minimum and maximum degree constraints, $n$ is a lower-bound on the number of vertices, and $D$ is the total sum of degrees. We remark that the first part of Lemma \ref{convex} can be seen as a consequence of Karamata's inequality \cite{K}. However, we provide a short proof without using it.

\begin{proof} [Proof of Lemma \ref{convex}]
	Consider a relaxed version of the above constraints where we require $r\le x_i\le \Delta$ for $1\le i\le n$ but for all other $i> n$, we only require $0< x_i\le \Delta$. Note that any solution under the original constraints is also a valid solution under the relaxed constraints. Then, it suffices to show that there exists a maximizer of the relaxed version that also satisfies the conclusions of the original problem.
	
	Consider any sequence $x_1, \ldots, x_k$ that maximizes $\sum_{i=1}^n x_i$ among the sequences that satisfy the new relaxed constraints and maximize our objective. Without loss of generality, we may assume that $x_1\ge x_2\ge \cdots \ge x_k>0$. For a convex function $f$, note that whenever $0<b\le b'$, $f(b) + f(b') \le f(b-c) + f(b'+c)$ for any positive number $c$ such that $b-c \ge 0$. If $k>n$, then $x_n<\Delta$, otherwise $D=\sum_{i=1}^kx_i > n\Delta$, a contradiction. Then, one can decrease $x_k$ and increase $x_n$ by $1$ so that $\sum_{i=1}^k f(x_i)$ does not decrease, but $\sum_{i=1}^n x_i$ increases. This is a contradiction to our choice of the sequence $x_1, \ldots, x_k$. Thus one can obtain an optimal solution where $k=n$ and $r \le x_i \le \Delta$ for all $i$. Now, if there exists $r < x_i \le x_j < \Delta$, one can decrease $x_i$ and increase $x_j$ by $1$ to not decrease the objective value. Hence, there is a maximizer that contains at most one $x_i$ that is not $r$ nor $\Delta$, proving the first part of our lemma.
	
	To prove the moreover part, it follows from previous arguments that at most one of the $x_i$'s is strictly between $r$ and $\D$. Then, without loss of generality we assume that $x_i=\Delta$ for all $1\le i\le j$, $r \le x_{j+1} \le \D$ and $x_i=r$ for all $j+1<i\le n$. Now, find the unique $0 \le \a \le 1$ such that $\a \D + (1-\a) r = x_{j+1}$, and apply Jensen's inequality to get $f(x_{j+1}) \le \a f(\D) + (1-\a) f(r)$. Our lemma follows immediately where $a=j+\alpha$.
	
\end{proof}

\begin{proof}[Proof of Theorem \ref{main} for t=3]
	Let $G$ be a minimum counterexample. For a vertex $v$, let $\mu(v)$ be the number of edges missing in its neighborhood. The number of triangles a vertex $v$ partakes in is exactly $\binom{d(v))}{2}-\mu(v)$. Then, the number of triangles in $G$ is exactly $\frac{1}{3} \sum_{v\in V(G)} \binom{d(v)}{2}-\frac{1}{3} \sum_{v\in V(G)} \mu(v)$. The second sum is exactly the number of induced copies of $K_{1, 2}$ which is at least $r^2$ by Lemma \ref{K12}. To upper-bound the first sum consider the function $f : \N \rightarrow \N$ given by $f(x)=\binom{x}{2}$. Let $D=\sum_{v\in V(G)}d(v)=2(q\binom{\Delta+1}{2}+\binom{r}{2}+s)$ be the sum of degrees and $n = q(\D+1) + (r+1)$ represents a lower-bound on $|V(G)|$ as shown in Lemma \ref{lem:largen}. It can be easily checked that $D < n \D$. Now, note that $D = \sum_{v\in V(G)}d(v) \ge n \cdot \min_{v \in V(G)} d(v) \ge nr$. Thus, we have that $\D > r$. Then, one can apply Lemma \ref{convex} to bound the first sum. In particular, one can check that the resulting number $a=q(\Delta+1)-2\cdot\frac{r-s}{\Delta-r}$ and $n-a= r+1+2\cdot\frac{r-s}{\Delta-r}$. Then the number of triangles in $G$ can be bounded in the following manner.

\begin{align*}
	& \frac{1}{3} \sum_{v\in V(G)} \binom{d(v)}{2}-\frac{1}{3} \sum_{v\in V(G)} \mu(v)\\
	 & \le \frac{1}{3}  \left(q(\Delta+1)-2\cdot\frac{r-s}{\Delta-r}\right)\binom{\Delta}{2}+\frac{1}{3}\left(r+1+2\cdot\frac{r-s}{\Delta-r}\right)\binom{r}{2} - \frac{r^2}{3} \\
	&= T_3+\binom{r}{2}-\binom{s}{2}-\frac{2}{3}\cdot\frac{r-s}{\Delta-r}\left(\binom{\Delta}{2}-\binom{r}{2}\right)-\frac{r^2}{3}\\
	&= T_3+\frac{1}{2}(r-s)(r+s-1)-\frac{1}{3}(r-s)(\Delta+r-1)-\frac{r^2}{3}\\
	&= T_3-\frac{1}{6}(r-s)(2\Delta-r-s+1)-\frac{1}{3}(r^2-s(r-s)).
\end{align*}

Focusing on the last line, since $0\le s< r< \Delta$, it follows that $r-s$, $2\Delta-r-s+1$, and $r^2-s(r-s)$ are all strictly positive. Then, $G$ has strictly less than $T_3$ many triangles, a contradiction. 

\end{proof}

\section{Maximizing the number of $K_t$'s}
We now adapt the methods used in the previous section to prove Theorem \ref{main} for general $t$. We assume that $G$ is a minimum counterexample where $|E(G)|=q\binom{\Delta+1}{2}+\binom{r}{2}+s$ with $0 \le s < r \le \D$, $q\ge 1$, and $G$ is not one of the extremal structures described in Theorem \ref{main} but $G$ has at least $T_t:= q\binom{\Delta+1}{t}+\binom{r}{t}+\binom{s}{t-1}$ many $K_t$'s. Let us start with an analog of Lemma \ref{lem:cut}. 

\begin{lemma}
	\label{lem:cutt}
	If $G$ is a minimum counterexample, then $G$ does not contain an edge cut of size at most $r-1$. In particular, every vertex must also have degree at least $r$.
\end{lemma}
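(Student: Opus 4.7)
The plan is to mirror the proof of Lemma \ref{lem:cut} for the general $t \geq 3$ case. Suppose for contradiction that $G$ has an edge cut $B$ of size $\beta \leq r-1$; Lemma \ref{lem:connectt} gives $\beta \geq 1$, and since any $K_t$ crossing the cut with split $(i, t-i)$ uses $i(t-i) \geq t-1$ edges of $B$, Lemma \ref{alledget} forces $\beta \geq t-1$.

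The main new ingredient I would need is the estimate
\[
\#\bigl\{K_t\text{'s in }G\text{ that use at least one edge of }B\bigr\} \;\leq\; \binom{\beta}{t-1},
\]
generalizing the $\binom{\beta}{2}$ bound used when $t=3$. The approach is to build an explicit injection from cut $K_t$'s into $\binom{B}{t-1}$. For a cut $K_t$ $C$ with minority side of size $i \leq t/2$ (breaking the tie $i = t-i$ via the smallest vertex of $C$), I list its minority vertices as $a_1 < \cdots < a_i$ and its majority vertices as $b_1 < \cdots < b_{t-i}$, and send $C$ to $\phi(C) = \{a_1 b_j : 1 \leq j \leq t-i\} \cup \{a_j b_1 : 2 \leq j \leq i\}$, a canonical set of $t-1$ cut edges. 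Reading off the degree sequence of $\phi(C)$ recovers $a_1$ as the (smallest-labeled) maximum-degree vertex with degree $t-i$, then recovers $i$, the minority side, and finally the whole vertex set $V(C)$, so $\phi$ is injective.

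With this bound the argument splits into cases exactly as in Lemma \ref{lem:cut}. When $\beta \leq s$ and $s \geq t-1$, Lemma \ref{easy:convex} yields $\binom{s}{t-1} \geq \binom{s-\beta}{t-1} + \binom{\beta}{t-1}$, and chaining $k_t(G) \geq T_t$ against the minimality bound $k_t(G') \leq q\binom{\D+1}{t}+\binom{r}{t}+\binom{s-\beta}{t-1}$ and the cut-$K_t$ estimate forces every inequality to be an equality. Hence $G'$ is extremal and contains a $K_{\D+1}$ component which, by the maximum-degree constraint, cannot be incident to any edge of $B$; this $K_{\D+1}$ then survives as a component of $G$, contradicting Lemma \ref{lem:connectt}. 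The sub-case $s < t-1$ is impossible because it would force $\beta \leq s < t-1$. The case $\beta > s$ is handled identically: for $r \geq t$ one applies Lemma \ref{easy:convex} with $x=r-1$, $w=s$, $y=r-1-\beta+s$, $z=\beta$ (together with the Pascal identity $\binom{r}{t} = \binom{r-1}{t}+\binom{r-1}{t-1}$) to get the analogous equality chain and the same $K_{\D+1}$-component contradiction, while $r<t$ is ruled out because it would give $\beta \leq r-1 \leq t-2 < t-1$.

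The minimum-degree statement then follows at once: a vertex $v$ with $d(v) < r$ has its incident edges forming an edge cut of size $<r$, which the first part of the lemma forbids. The main obstacle I foresee is the careful verification of the injectivity of $\phi$, particularly in the balanced split $i = t-i$ where the label-based tie-break is essential to recover the correct side-labeling of $V(C)$ from $\phi(C)$; beyond that, the argument is a direct $t$-analogue of the $t=3$ proof.
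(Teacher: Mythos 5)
Your argument follows essentially the same route as the paper's: bound the number of cut $K_t$'s by $\binom{\beta}{t-1}$ via an injection into $(t-1)$-subsets of $B$, then chain with Lemma \ref{easy:convex} in the two cases $\beta \le s$ and $\beta > s$ to force $G \setminus B$ to be extremal (hence containing a $K_{\Delta+1}$ component, contradicting Lemma \ref{lem:connectt}). The paper realizes the injection by sending each cut $K_t$ to a spanning tree of the bipartite graph its $B$-edges form; your double-star is a valid explicit choice of such a tree, and the injectivity you flag as a potential obstacle is in fact immediate because any connected $(t-1)$-edge subgraph of $B$ spans exactly $t$ vertices, so $V(\phi(C))$ already recovers $C$ without any degree- or label-based bookkeeping.
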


\begin{proof}
	For the sake of contradiction, assume that $G$ contains a minimum cut $B$ of $\b$ edges where $0<\b<r$. Consider the graph $G\backslash B$. Let $k=k_t(G)-k_t(G\backslash B)$ be the number of $K_t$'s that contain at least one edge in $B$. We first show that $k \le \binom{\b}{t-1}$. To see this, consider $k':=$ the number of $(t-1)$-edge trees in the graph induced by the edges in $B$. First observe that $k'\le \binom{\b}{t-1}$ since every tree of size $t-1$ contains distinct $(t-1)$-subsets of $B$. Next, observe that $k\le k'$ because, by the minimality of $B$, every $K_t$ restricted to the edges of $B$ contains a unique tree of size $t-1$ that is counted once in $k'$. This proves our claim. 
	
	Since $G\backslash B$ is not a counterexample, $k_t(G\backslash B)\le q\binom{\Delta+1}{t}+k_t(L_{\binom{r}{2}+s-\b})$. If $\b\le s$, then by Lemma \ref{easy:convex},  $k_t(L_{\binom{r}{2}+s}) - k_t(L_{\binom{r}{2}+s-\b}) \ge \left(\binom{r}{t}+\binom{s}{t-1}\right)-\left(\binom{r}{t} + \binom{s-\b}{t-1}\right)\ge \binom{\b}{t-1}$. If $\b>s$, since $\b\le r-1$, then by Lemma \ref{easy:convex}, $k_t(L_{\binom{r}{2}+s}) - k_t(L_{\binom{r}{2}+s-\b}) \ge \left(\binom{r}{t}+\binom{s}{t-1}\right)-\left(\binom{r-1}{t} + \binom{r-1+s-\b}{t-1}\right)\ge \binom{\b}{t-1}$. Note that in either case, we have shown that $k_t(L_{\binom{r}{2}+s}) - k_t(L_{\binom{r}{2}+s-\b}) \ge \binom{\b}{t-1}$. Then, we have the following.
	\begin{align*}
	k_t(G\backslash B) &= k_t(G) - k \\
	&\ge q\binom{\Delta+1}{t} + k_t(L_{\binom{r}{2}+s}) - \binom{\b}{t-1} \\
	&\ge q\binom{\Delta+1}{t} + k_t(L_{\binom{r}{2}+s-\b}) \ge k_t(G\backslash B).
	\end{align*}
	
	Since $G\backslash B$ is not a counterexample, we can conclude that $G\backslash B$ is one of the extremal structures in Theorem \ref{main}. Hence, $G\backslash B$, and thus $G$, must contain a copy of $K_{\D+1}$. The maximum degree condition of $G$ forces this copy of $K_{\D+1}$ to be disconnected from the rest of the graph, which is a contradiction to Lemma \ref{lem:connectt}.
	
\end{proof}

The next few lemmas show that we must have that $\Delta>r>t-1$. 

\begin{lemma} \label{Delta>r}
	If $G$ is a minimum counterexample, then $\Delta > r$. 
\end{lemma}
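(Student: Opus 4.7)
The plan is a short proof by contradiction: assume $r=\Delta$ and use the regularity forced by Lemma~\ref{lem:cutt} together with the vertex count lower bound from Lemma~\ref{lem:largen} to derive a contradiction with the constraint $s<r$.

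First I would suppose for contradiction that $r=\Delta$. Since the maximum degree of $G$ is at most $\Delta$, Lemma~\ref{lem:cutt} forces every vertex of $G$ to have degree exactly $r=\Delta$, so $G$ is $\Delta$-regular. Next I would apply Lemma~\ref{lem:largen} to get
\[
|V(G)| \;\ge\; q(\Delta+1)+(r+1) \;=\; q(\Delta+1)+\Delta+1.
\]

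Now I would combine $\Delta$-regularity with the given edge count. Since $2|E(G)|=\Delta\cdot|V(G)|$ and $|E(G)|=q\binom{\Delta+1}{2}+\binom{\Delta}{2}+s$, I obtain
\[
q\Delta(\Delta+1)+\Delta(\Delta-1)+2s \;=\; \Delta\cdot|V(G)| \;\ge\; \Delta\bigl(q(\Delta+1)+\Delta+1\bigr).
\]
Cancelling the $q\Delta(\Delta+1)$ terms and simplifying the remaining inequality reduces to $2s\ge 2\Delta$, i.e., $s\ge\Delta=r$, which contradicts the assumption $0\le s<r$. This completes the argument.

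There is no real obstacle here; the proof is a straightforward counting argument. The only thing to double check is the divisibility/integrality, but since we never need $|V(G)|$ itself to equal any particular value—only the inequality from Lemma~\ref{lem:largen}—the argument goes through unconditionally once $\Delta$-regularity is established. In particular, this is why it is crucial that Lemma~\ref{lem:cutt} yields a minimum degree of $r$ (matching the upper bound $\Delta$ in the case $r=\Delta$), rather than something weaker.
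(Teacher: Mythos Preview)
Your proof is correct and follows essentially the same approach as the paper: assume $r=\Delta$, use Lemma~\ref{lem:cutt} to force $\Delta$-regularity, plug in the vertex lower bound from Lemma~\ref{lem:largen}, and derive a contradiction via the edge count. The only cosmetic difference is that the paper phrases the final contradiction as $|E(G)|>|E(G)|$ directly, whereas you equivalently rearrange to $s\ge r$.
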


\begin{proof}
	Suppose for the sake of contradiction that $\Delta=r$. By Lemma \ref{lem:cutt}, every vertex has degree $r=\Delta$. Since $r > s$, by Lemma \ref{lem:largen}, the number of edges is $|V(G)| \cdot \frac{\Delta}{2}\ge (q(\Delta+1)+r+1) \cdot \frac{\Delta}{2}>q\binom{\Delta+1}{2}+\binom{r}{2}+s=|E(G)|$, a contradiction. 
\end{proof}

From now on, we may assume that $\Delta> r$. Note that each vertex partakes in at most $\binom{d(v)}{t-1}$ many copies of $K_t$. Then, a crude upper-bound on $k_t(G)$ is $\frac{1}{t}\sum_{v\in V(G)}\binom{d(v)}{t-1}$. Thus, we use Lemma \ref{convex} to bound the above sum. 

\begin{lemma}
	\label{lem:betterbound}
	If $G$ is a minimum counterexample, then $\frac{1}{t}\sum_{v\in V(G)}\binom{d(v)}{t-1}$ is at most
		\begin{itemize}
			\item $T_t+\binom{r}{t-1}-\binom{s}{t-1}-\frac{2(r-s)}{t}\binom{r}{t-2}$, if $r\ge t-1$,
			\item $T_t-\frac{2(r-s)}{t(\Delta-r)}\binom{\Delta}{t-1}$, if $r<t-1$. 
		\end{itemize}
\end{lemma}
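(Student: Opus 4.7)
The plan is to apply Lemma \ref{convex} to the convex function $f(x) = \binom{x}{t-1}$ on the degree sequence of $G$. I would first verify its hypotheses: Lemma \ref{lem:cutt} gives minimum degree at least $r$, Lemma \ref{Delta>r} gives $r < \Delta$, and Lemma \ref{lem:largen} gives $|V(G)| \ge n$, where $n := q(\Delta+1) + (r+1)$. Setting $D := \sum_v d(v) = 2|E(G)| = q(\Delta+1)\Delta + r(r-1) + 2s$, an elementary calculation shows $D < n\Delta$ (using $\Delta > r > s$), and the bound $D \ge |V(G)| \cdot r \ge nr$ follows from combining the minimum degree estimate with the vertex-count bound. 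Hence all hypotheses of Lemma \ref{convex} hold.

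Solving the linear equation $a\Delta + (n-a)r = D$ gives $a = q(\Delta+1) - \frac{2(r-s)}{\Delta-r}$ and $n - a = (r+1) + \frac{2(r-s)}{\Delta-r}$. Lemma \ref{convex} then yields $\sum_v \binom{d(v)}{t-1} \le a\binom{\Delta}{t-1} + (n-a)\binom{r}{t-1}$. Dividing by $t$ and applying the identities $\frac{\Delta+1}{t}\binom{\Delta}{t-1} = \binom{\Delta+1}{t}$ together with $\frac{r+1}{t}\binom{r}{t-1} = \binom{r+1}{t} = \binom{r}{t} + \binom{r}{t-1}$ (valid for all $r$ by Pascal's identity, with the convention $\binom{r}{k}=0$ when $k > r$), the estimate simplifies to
\[
\frac{1}{t}\sum_v \binom{d(v)}{t-1} \;\le\; T_t + \binom{r}{t-1} - \binom{s}{t-1} - \frac{2(r-s)}{t(\Delta-r)}\left(\binom{\Delta}{t-1} - \binom{r}{t-1}\right).
\]

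To conclude, I would split into two cases. When $r \ge t-1$, iterating Pascal's identity gives the telescoping expression $\binom{\Delta}{t-1} - \binom{r}{t-1} = \sum_{k=r}^{\Delta-1}\binom{k}{t-2}$, and since each summand is at least $\binom{r}{t-2}$ this is bounded below by $(\Delta-r)\binom{r}{t-2}$; substituting into the displayed inequality produces the first bullet. When $r < t-1$, each of $\binom{r}{t-1}$, $\binom{s}{t-1}$, $\binom{r}{t}$ vanishes (using $s < r < t-1$), so $T_t = q\binom{\Delta+1}{t}$ and the displayed bound reduces directly to the second bullet. The proof is primarily computational; the only conceptual subtlety is ensuring the hypothesis $nr \le D$ of Lemma \ref{convex}, which is precisely where the structural work of Section 2 (in particular Lemmas \ref{lem:cutt} and \ref{lem:largen}) is used.
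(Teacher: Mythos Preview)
Your proof is correct and follows essentially the same approach as the paper's: apply Lemma \ref{convex} to $f(x)=\binom{x}{t-1}$ with $n=q(\Delta+1)+r+1$ and $D=2|E(G)|$, compute $a=q(\Delta+1)-\tfrac{2(r-s)}{\Delta-r}$, and arrive at the common intermediate bound $T_t+\binom{r}{t-1}-\binom{s}{t-1}-\tfrac{2(r-s)}{t(\Delta-r)}\bigl(\binom{\Delta}{t-1}-\binom{r}{t-1}\bigr)$, then split on whether $r\ge t-1$. The only cosmetic difference is that for the inequality $\binom{\Delta}{t-1}-\binom{r}{t-1}\ge(\Delta-r)\binom{r}{t-2}$ the paper gives a direct combinatorial count while you telescope via Pascal; your hypothesis verification (invoking Lemmas \ref{lem:largen} and \ref{Delta>r} explicitly for $nr\le D\le n\Delta$) is in fact slightly more careful than the paper's.
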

	
\begin{proof}
	Consider the function $f : \N \rightarrow \N$ given by $f(x)=\binom{x}{t-1}$. Note that $f(x)$ is convex. Let $D=\sum_{v \in V(G)}d(v)=2(q\binom{\Delta+1}{2}+\binom{r}{2}+s)$ and $n=q(\Delta+1)+r+1$. By Lemma \ref{lem:cutt}, we have that $r\le d(v)\le \Delta$ and thus $nr\le D\le n\Delta$.  Then, one can apply Lemma \ref{convex} to bound $\frac{1}{t}\sum_{v\in V(G)}\binom{d(v)}{t-1}$. Since $\D > r$ (by Lemma \ref{Delta>r}), one can check that the value of $a$ in Lemma \ref{convex} is $q(\Delta+1)-\frac{2(r-s)}{\Delta-r}$. Then,  
	\begin{align*}
		\frac{1}{t}\sum_{v\in V(G)}\binom{d(v)}{t-1} &\le \frac{1}{t} \left(q(\Delta+1)-\frac{2(r-s)}{\Delta-r}\right)\binom{\Delta}{t-1} + \frac{1}{t}\left((r+1)+\frac{2(r-s)}{\Delta-r}\right)\binom{r}{t-1} \\
		&= T_t + \binom{r}{t-1} -\binom{s}{t-1} - \frac{2(r-s)}{t(\Delta-r)} \left(\binom{\Delta}{t-1}-\binom{r}{t-1}\right).
	\end{align*}

	When $r<t-1$, our lemma follows immediately from the last line.  When $r\ge t-1$, it suffices to prove $\binom{\Delta}{t-1}\ge \binom{r}{t-1}+ (\Delta-r)\binom{r}{t-2}$. The inequality follows by viewing the left-hand-side as forming a team of $t-1$ from a group of $\Delta$ people. The right-hand-side represents either choosing $t-1$ from a special subgroup of $r$ people or forming a team with one person outside the special subgroup while filling the rest with people from within the special subgroup. 
\end{proof}

\begin{corollary} \label{valueofr}
	If $G$ is a minimum counterexample, then $r\ge t$.
\end{corollary}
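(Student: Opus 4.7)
The plan is to argue by contradiction: suppose $r \le t-1$. Then $s < r \le t-1$ forces both $\binom{r}{t} = 0$ and $\binom{s}{t-1} = 0$, so the target $T_t$ collapses to $q\binom{\Delta+1}{t}$. Combining the trivial counting bound $k_t(G) \le \frac{1}{t}\sum_{v\in V(G)}\binom{d(v)}{t-1}$ with Lemma \ref{lem:betterbound} should give $k_t(G) < T_t$, contradicting the counterexample hypothesis $k_t(G) \ge T_t$. The argument splits naturally into the two cases distinguished by Lemma \ref{lem:betterbound}, namely $r < t-1$ and $r = t-1$.

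If $r < t-1$, the second clause of Lemma \ref{lem:betterbound} yields
$$k_t(G) \le T_t - \frac{2(r-s)}{t(\Delta-r)}\binom{\Delta}{t-1}.$$
I would check that the correction term is strictly positive: $r-s \ge 1$ since $s<r$, $\Delta-r \ge 1$ by Lemma \ref{Delta>r}, and $\binom{\Delta}{t-1} \ge 1$ because the theorem's hypothesis $t \le \Delta+1$ guarantees $\Delta \ge t-1$. This immediately gives the desired contradiction.

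If $r = t-1$, the first clause of Lemma \ref{lem:betterbound} gives
$$k_t(G) \le T_t + \binom{r}{t-1} - \binom{s}{t-1} - \frac{2(r-s)}{t}\binom{r}{t-2}.$$
Here $\binom{r}{t-1} = \binom{t-1}{t-1} = 1$, $\binom{s}{t-1} = 0$, and $\binom{r}{t-2} = \binom{t-1}{t-2} = t-1$, so the bound simplifies to $T_t + 1 - \frac{2(r-s)(t-1)}{t}$. Since $r-s \ge 1$ and $t \ge 3$, the subtracted quantity is at least $\frac{2(t-1)}{t} > 1$, again forcing $k_t(G) < T_t$, a contradiction.

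The heavy lifting was already done in Lemma \ref{lem:betterbound}, so I anticipate no serious obstacle; the corollary amounts to noticing that in either subcase the correction terms captured by that lemma are strictly positive whenever $r$ is too small, and then invoking $t \le \Delta+1$ to ensure $\binom{\Delta}{t-1} > 0$ in the first subcase.
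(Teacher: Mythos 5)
Your proof is correct and follows essentially the same route as the paper: bound $k_t(G)$ by $\frac{1}{t}\sum_{v}\binom{d(v)}{t-1}$, apply Lemma \ref{lem:betterbound}, and observe that the correction terms force $k_t(G) < T_t$ in both subcases. You have merely spelled out the positivity checks (that $r-s\ge 1$, $\Delta-r\ge 1$, and $\Delta\ge t-1$) that the paper leaves implicit.
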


\begin{proof}
	Suppose for the sake of contradiction that $r\le t-1$. We can bound $k_t(G)$ by $\frac{1}{t} \sum_{v \in V(G)} \binom{d(v)}{t-1}$. Using the previous lemma, if $r<t-1$, it follows immediately that $k_t(G)<T_t$, a contradiction. If $r=t-1$, then $k_t(G)\le T_t+1-\frac{2}{t}(t-1)<T_t$, also a contradiction. 
\end{proof}

	From now on, we may assume that $\Delta>r>t-1$. Note that when $r\ge t$, we can no longer use the above lemma to conclude immediately that $k_t(G)< T_t$. This is because $\binom{d(v)}{t}$ is too crude of a bound for the number of $K_t$'s containing a particular vertex. There may be in fact many $(t-1)$-subsets in the neighborhood of $v$ that contain missing edges and cannot form a $K_t$. Therefore, our next goal is to lower-bound such objects. 
	
	On a high level, given a vertex $v$, we want to bound the number of incomplete $K_{t-1}$'s by the number of missing edges in its neighborhood. It would be ideal if each missing edge in $N(v)$ produced a lot of incomplete $K_{t-1}$'s and each incomplete $K_{t-1}$ did not involve too many missing edges. In some sense, the worst scenario is when all the incomplete $K_{t-1}$'s involve a lot of missing edges (i.e., a neighborhood where all the edges are missing). To show that the above worst case does not happen, we will prove that each neighborhood contain enough edges so that there exist a lot of $K_{t-1}$'s that do not have too many missing edges.

\begin{lemma}
	\label{lem:mindegt}
	If $G$ is a minimum counterexample, then the subgraph induced by the neighborhood of every vertex contains more than $\binom{r-1}{2}$ edges.  
\end{lemma}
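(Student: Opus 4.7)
The plan is to argue by contradiction: suppose some vertex $v$ satisfies $e_v := |E(G[N(v)])| \le \binom{r-1}{2}$. The strategy is to delete $v$, bound $k_t(G-v)$ via Theorem~\ref{main} (which applies because $G-v$ has strictly fewer edges than the minimum counterexample $G$), bound the $K_t$-contribution of $v$ via Kruskal--Katona, and show the sum is strictly below $T_t$. Set $d = d(v)$; by Lemma~\ref{lem:cutt}, $d \ge r$. Since the number of $K_t$'s containing $v$ equals $k_{t-1}(G[N(v)])$, Corollary~\ref{cor:maxkt} bounds it by $\binom{r-1}{t-1}$. Denoting by $F(m')$ the extremal $K_t$-count over $m'$-edge graphs of maximum degree at most $\Delta$, we obtain
\[
k_t(G) = k_t(G-v) + k_{t-1}(G[N(v)]) \le F(m-d) + \binom{r-1}{t-1}.
\]

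The next step is to show this is at most $T_t = F(m)$. Since $F$ is non-decreasing and $d \ge r$, we have $F(m-d) \le F(m-r)$, and a direct Pascal-type calculation gives
\[
F(m) - F(m-r) = \binom{r-1}{t-1} + \varepsilon, \quad \varepsilon = \begin{cases}\binom{s-1}{t-2}&\text{if } s \ge 1,\\ \binom{r-3}{t-2}&\text{if } s = 0.\end{cases}
\]
When $\varepsilon > 0$---that is, $s \ge t-1$, or $s = 0$ with $r \ge t+1$---we immediately get $k_t(G) \le T_t - \varepsilon < T_t$, contradicting the counterexample hypothesis. The residual cases $1 \le s \le t-2$ and $(s,r)=(0,t)$ give only $k_t(G) \le T_t$, so additional structural input is needed.

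In the residual cases, the hypothesis $k_t(G) \ge T_t$ forces equality $k_t(G) = T_t$, which makes $G-v$ an extremal graph for $m-d$ edges; that is, $G-v \cong q'K_{\Delta+1} \cup L'$ for some $L' \in \mathcal{L}_{t,\Delta}$. But Lemma~\ref{lem:connectt} forbids $K_{\Delta+1}$ inside $G$ (hence inside $G-v$), so $q' = 0$ and $m-d < \binom{\Delta+1}{2}$. This rules out $q \ge 2$ directly, since then $m-d \ge 2\binom{\Delta+1}{2} - \Delta > \binom{\Delta+1}{2}$. For $q = 1$, one applies the cruder bound $F(m-d) \le k_t(L_{\binom{\Delta+1}{2}-1}) = \binom{\Delta}{t} + \binom{\Delta-1}{t-1}$ and verifies
\[
T_t - F(m-d) - \binom{r-1}{t-1} \ge \binom{\Delta-1}{t-2} + \binom{r-1}{t} + \binom{s}{t-1} > 0,
\]
which holds since $\Delta > r \ge t$, again contradicting $k_t(G) = T_t$. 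The main obstacle is precisely this tight residual case, where the principal bound is already exactly $T_t$ and one must invoke Lemma~\ref{lem:connectt} together with a refined comparison to close the argument.
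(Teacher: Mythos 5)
Your proof is correct, and it takes a genuinely different (though structurally related) route from the paper. Both proofs delete $v$, invoke the extremality/minimality of $G-v$, bound $k_{t-1}(G[N(v)])$ via Kruskal--Katona, and close with the observation that a $K_{\Delta+1}$ inside $G$ would violate Lemma~\ref{lem:connectt} (together with Lemma~\ref{lem:largen}). Where you diverge: the paper introduces a real parameter $x$ defined by $\binom{x}{t-1}=(\text{number of }K_t\text{'s through }v)$, splits into $x\le s$ and $x>s$, and in each case exhibits a chain of inequalities that is \emph{always} forced tight by Lemma~\ref{easy:convex}; the tightness directly delivers extremality of $G'$. It notably does \emph{not} use Lemma~\ref{lem:cutt}. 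You instead take the cruder route of using $d(v)\ge r$ from Lemma~\ref{lem:cutt} together with the single bound $k_{t-1}(G[N(v)])\le\binom{r-1}{t-1}$, then compute $F(m)-F(m-r)=\binom{r-1}{t-1}+\varepsilon$ explicitly. This gives an immediate contradiction when $\varepsilon>0$, but leaves the residual cases ($1\le s\le t-2$, or $(s,r)=(0,t)$) where your primary bound is only tight, and you need a secondary argument: extremality of $G-v$ plus connectedness forces $q'=0$, which rules out $q\ge 2$ at once, and for $q=1$ you refine the bound $F(m-d)\le F\left(\binom{\Delta+1}{2}-1\right)$ and verify the resulting expression $\binom{\Delta-1}{t-2}+\binom{r-1}{t}+\binom{s}{t-1}$ is strictly positive (using $\Delta>r\ge t$). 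I checked your Pascal-type computations for $\varepsilon$ in both the $s\ge 1$ and $s=0$ subcases, as well as the final residual-case inequality; all are correct. The paper's parametrization by $x$ is cleaner in that it avoids any residual case, but your version is arguably more transparent in showing exactly how much slack there is (via $\varepsilon$), and makes explicit use of the already-available Lemma~\ref{lem:cutt}, which is natural at this stage of the argument.
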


\begin{proof}
	First, from Corollary \ref{valueofr}, we know that $r \ge t$. By applying Corollary \ref{cor:maxkt}, it suffices to show that every vertex of $G$ is in more than $\binom{r-1}{t-1}$ copies of $K_t$. Suppose for the sake of contradiction that there exists a vertex $v$ that is in at most $\binom{r-1}{t-1}$ copies of $K_t$. Let $\binom{x}{t-1}$ be the number of copies of $K_t$ containing $v$ for some real number $x \ge t-1$. By assumption, $x \le r-1$. It is also easy to see that $d(v) \ge x$. Consider the graph $G'=G\backslash v$.
	\smallskip
	
	Case 1: $x\le s$. Note that $G'$ contains at most $|E(G)| - \lceil x\rceil = q\binom{\D+1}{2} + \binom{r}{2} + s - \lceil x\rceil$ edges. Since $G'$ is not a counterexample, we have
	\begin{align*}
		k_t(G) &\ge q\binom{\Delta+1}{t}+\binom{r}{t}+\binom{s}{t-1} \\
		&\ge q\binom{\Delta+1}{t}+\binom{r}{t}+\binom{ s-\lceil x\rceil}{t-1}+\binom{\lceil x\rceil}{t-1} \\
		&\ge k_t(G')+\binom{x}{t-1} \ge k_t(G). 	
	\end{align*}	 
	
	Then it follows that the above equation is tight. Note that $t-1\le x \le s$. Since $s\ge t-1$, looking at the second inequality, by Lemma \ref{easy:convex}, $\lceil x\rceil=s$. Since the third inequality is tight, $x=\lceil x\rceil  =s$ and $G'$ has exactly $q\binom{\Delta+1}{t}+\binom{r}{t}$ copies of $K_t$. Since $G'$ is not a counterexample, $G'$ must contain exactly $q\binom{\D+1}{2} + \binom{r}{2}$ edges, and $G'$ is an extremal structure containing a copy of $K_{\D+1}$. Then, $G$ also contains a copy of $K_{\D+1}$ which must be disconnected from the rest of $G$ due to the maximum degree condition on $G$, contradicting Lemma \ref{lem:connectt}. 
	\smallskip
	
	Case 2: $x> s$. Note that $G'$ contains at most $|E(G)| - \lceil x\rceil = q\binom{\D+1}{2} + \binom{r-1}{2} + r - 1 + s - \lceil x\rceil$ edges. Since $G'$ is not a counterexample, we have
	\begin{align*}
		k_t(G) &\ge q\binom{\Delta+1}{t}+\binom{r-1}{t}+\binom{r-1}{t-1}+\binom{s}{t-1} \\
		&\ge q\binom{\Delta+1}{t}+\binom{r-1}{t} + \binom{ r-1+s-\lceil x\rceil}{t-1}+\binom{\lceil x \rceil}{t-1} \\
		&\ge k_t(G')+ \binom{x}{t-1} \ge k_t(G).	
	\end{align*}
	
	Thus the above inequalities are tight. Since the second inequality is tight and $r\ge t$, by Lemma \ref{easy:convex}, $\lceil x\rceil=r-1$. Since the third inequality is tight, $x=\lceil x\rceil =r-1$ and $G'$ has exactly $q\binom{\Delta+1}{t}+\binom{r-1}{t} + \binom{s}{t-1}$ copies of $K_t$. Furthermore, $G'$ contain exactly $q\binom{\D+1}{2} + \binom{r-1}{2} + s$ edges, and $G'$ is an extremal structure containing a copy of $K_{\D+1}$. Similar to Case 1, we get a contradiction to Lemma \ref{lem:connectt}, proving our lemma.
	
\end{proof}

	To aid the upcoming analysis, we prove that the function mapping $\binom{x}{2}$ to $\binom{x}{t-1}$ is convex, which was also useful in \cite{GLS}. 
	
\begin{lemma} \label{gconv}
	Let $u(x)$ be the positive root of $\binom{u}{2} = x$, i.e., $u(x) = \frac{1+\sqrt{1+8x}}{2}$. Then, the following function $g : \R \rightarrow \R$ is convex.
	\begin{align*}
	g(x) = 
		\begin{cases}
		\binom{u(x)}{t-1}, &\text{if	} x \ge \binom{t-2}{2} \\ 
		0, &\text{otherwise}
		\end{cases} 
	\end{align*}
\end{lemma}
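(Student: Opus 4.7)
The plan is to split $g$ at the corner $x_0 := \binom{t-2}{2}$, at which $u(x_0) = t - 2$. On $(-\infty, x_0)$ the function $g \equiv 0$ is trivially convex, so it suffices to (i) show $g$ is convex on $(x_0, \infty)$, and (ii) verify convexity survives across the corner. Since $\binom{t-2}{t-1} = 0$, $g$ is continuous at $x_0$, and (ii) reduces to the statement that the right-derivative $g'(x_0^+)$ is nonnegative (the left derivative being $0$).

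For part (i) I parametrize by $u \ge t - 2$ via $x = \binom{u}{2}$, so $dx/du = u - 1/2 > 0$, and set $P(u) := u(u-1)\cdots(u - t + 2)$, giving $g(x(u)) = P(u)/(t-1)!$. A direct chain-rule computation for a parametric curve yields
\[
g''(x) = \frac{P''(u)(u - 1/2) - P'(u)}{(t-1)!\,(u - 1/2)^3},
\]
and since the denominator is positive for $u \ge t - 2 \ge 1$, convexity of $g$ on $(x_0, \infty)$ reduces to
\[
(u - 1/2)\, P''(u) \ge P'(u) \qquad \text{for all } u \ge t - 2. \quad (\star)
\]

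I intend to prove $(\star)$ by induction on $t$. Base case $t = 3$: $P(u) = u(u - 1)$, so both sides equal $2u - 1$, and equality holds. For the inductive step, write $P_{t+1}(u) = P_t(u)(u - t + 1)$; the product rule yields $P_{t+1}' = P_t'(u - t + 1) + P_t$ and $P_{t+1}'' = P_t''(u - t + 1) + 2 P_t'$. Multiplying the inductive hypothesis $(u - 1/2)\,P_t'' \ge P_t'$ by the nonnegative factor $u - t + 1$ (valid for $u \ge t - 1$) and collecting terms reduces the target $(\star)$ for $P_{t+1}$ to the auxiliary bound $(2u - 1)\,P_t'(u) \ge P_t(u)$. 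This last inequality follows from the logarithmic derivative $P_t'(u)/P_t(u) = \sum_{i=0}^{t-2} 1/(u - i) \ge 1/u$ combined with the trivial estimate $1/u \ge 1/(2u - 1)$ (valid for $u \ge 1$).

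For part (ii), at $u = t - 2$ only the term with $i = t - 2$ survives in $P'(u) = \sum_{i} \prod_{k \ne i}(u - k)$, giving $P'(t - 2) = (t - 2)!$, hence $g'(x_0^+) = 2/[(t - 1)(2t - 5)] > 0$, confirming convexity across the corner. The main obstacle in this plan is the inductive step for $(\star)$: the correct rearrangement of the product-rule expansion is needed to collapse it to the cleaner auxiliary inequality on $P_t$, but once this is identified both pieces are elementary.
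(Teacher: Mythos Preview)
Your proof is correct and takes a genuinely different route from the paper's. Both arguments show $g'$ is non-decreasing on $[x_0,\infty)$, but the mechanics differ. The paper writes
\[
g'(x)=\frac{u'}{(t-1)!}\sum_{i=0}^{t-2}\frac{(u)_{t-1}}{u-i}
\]
and observes that each summand contains a factor $(u-1)$ or $(u-2)$; pairing that factor with $u'$ yields $u'(u-1)=1-\frac{1}{\sqrt{1+8x}}$ or $u'(u-2)=1-\frac{3}{\sqrt{1+8x}}$, both non-negative and non-decreasing for $x\ge 1$, while the remaining linear factors are likewise non-negative and non-decreasing. Hence each term of $g'$ is a product of non-decreasing non-negative functions, and $g'$ is monotone without ever computing $g''$ or inducting on $t$ (the case $t=3$, where $g(x)=x$, is handled separately).

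Your approach instead computes $g''$ explicitly via the parametrization $x=\binom{u}{2}$, reducing convexity to the polynomial inequality $(u-\tfrac12)P''\ge P'$, which you dispatch by an induction on $t$ that collapses to the log-derivative bound $(2u-1)P_t'\ge P_t$. This is more computational but fully systematic: once the parametric $g''$ formula is written down, no clever pairing is needed, and the induction step is routine algebra. The paper's argument is shorter and avoids induction, but relies on spotting the right grouping of factors. Your treatment of the corner at $x_0$ (continuity plus $g'(x_0^+)\ge 0=g'(x_0^-)$) is also more explicit than the paper's, which leaves that gluing implicit.
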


\begin{proof}
	Observe that when $t=3$, $g$ is trivially convex. To prove convexity of $g$ for $t>3$, consider the derivative of $g$. Note that $g'(x)=0$ for all $x< \binom{t-2}{2}$. We will show that $g'(x)$ is non-negative and increasing for $x\ge \binom{t-2}{2}$. For the convenience of notation, let $u=u(x)$. Observe that $u'= \frac{2}{\sqrt{1+8x}}$ and 

	$$g'(x) = \frac{u'}{(t-1)!} \sum_{i=0}^{t-2}\frac{(u)_{t-1}}{u-i}.$$

	Note that $u'\cdot (u-i)=1-\frac{2i-1}{\sqrt{1+8x}}$ which is a non-negative non-decreasing function for $i= 1$ and $i= 2$, for $x\ge 1$. For $1\le i\le t-2$, the function $\frac{(u)_{t-1}}{u-i} = u(u-1)\cdots (u-i+1)(u-i-1)\cdots (u-t+2)$ is simply a product of $t-2$ non-negative linear terms that are also non-decreasing. Furthermore, $\frac{(u)_{t-1}}{u-i}$ contains a factor of $(u-1)$ or $(u-2)$, or possibly both. By selecting one of them to combine with $u'$, every term in the sum can be broken into a product of non-negative non-decreasing functions. Then, $g'(x)$ is  also non-negative and non-decreasing. Thus it follows that $g(x)$ is convex, finishing the proof for Lemma \ref{gconv}.
\end{proof}

\begin{corollary} \label{identity}
	Given integers $r, t$ where $r> t-1\ge 2$, the following inequality is true for all real numbers $x\ge r$:
	\begin{equation*}
	\frac{\binom{x}{t-1}-\binom{r-1}{t-1}}{\binom{x}{2}-\binom{r-1}{2}}\ge \frac{\binom{r-1}{t-2}}{r-1}.
	\end{equation*}
\end{corollary}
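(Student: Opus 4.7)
The plan is to reduce the inequality to the monotonicity of secant slopes of the convex function $g$ defined in Lemma \ref{gconv}. First I would substitute $y := \binom{x}{2}$ and $y_0 := \binom{r-1}{2}$. Since $r - 1 \ge t-1 \ge 2$ and $x \ge r$, both $y$ and $y_0$ lie in the region $\{y : y \ge \binom{t-2}{2}\}$ where $g(y) = \binom{u(y)}{t-1}$ with $u(y)$ the positive root of $\binom{u}{2} = y$. In particular, $g(y_0) = \binom{r-1}{t-1}$ and $g(y) = \binom{x}{t-1}$, so the left-hand side of the desired inequality is precisely the secant slope $\frac{g(y)-g(y_0)}{y-y_0}$.

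Next I would evaluate the secant slope of $g$ between the two \emph{specific} points $y_0 = \binom{r-1}{2}$ and $y_1 := \binom{r}{2}$. Using Pascal's rule $\binom{r}{t-1} - \binom{r-1}{t-1} = \binom{r-1}{t-2}$ together with the identity $\binom{r}{2} - \binom{r-1}{2} = r-1$, a direct calculation gives
\[
\frac{g(y_1) - g(y_0)}{y_1 - y_0} \;=\; \frac{\binom{r}{t-1} - \binom{r-1}{t-1}}{\binom{r}{2} - \binom{r-1}{2}} \;=\; \frac{\binom{r-1}{t-2}}{r-1},
\]
which is exactly the right-hand side of the claimed inequality.

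The conclusion then follows from a standard property of convex functions: if $g$ is convex and $y_0$ is fixed, then the map $y \mapsto \frac{g(y) - g(y_0)}{y - y_0}$ is non-decreasing for $y > y_0$ (the ``three-chord'' or monotone-slope lemma). Since $x \ge r$ and $\binom{\cdot}{2}$ is increasing on $[1,\infty)$, we have $y \ge y_1 > y_0$, so the secant slope from $y_0$ to $y$ dominates the one from $y_0$ to $y_1$, yielding the corollary. I do not expect any real obstacle, since the convexity of $g$ is already established in Lemma \ref{gconv} and the evaluation at $x=r$ is a routine binomial identity; the only thing to double-check is that $y_0 \ge \binom{t-2}{2}$, which holds because $r - 1 \ge t - 1 > t-2$.
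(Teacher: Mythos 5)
Your proof is correct and follows essentially the same route as the paper's: both invoke the convexity of $g$ from Lemma \ref{gconv} and apply the monotone secant-slope property at the three points $\binom{r-1}{2}$, $\binom{r}{2}$, $\binom{x}{2}$. Your explicit verification that $\binom{r-1}{2} \ge \binom{t-2}{2}$ and the Pascal computation at $\binom{r}{2}$ are small added details the paper leaves implicit, but the argument is identical in substance.
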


\begin{proof}
	Note that $\frac{g(c)-g(a)}{c-a} \ge \frac{g(b)-g(a)}{b-a}$ holds for any convex function $g$ and $a < b \le c$. To obtain Corollary \ref{identity}, one can use the function $g$ defined in Lemma \ref{gconv} and substitute $\binom{r-1}{2}$, $\binom{r}{2}$, and $\binom{x}{2}$ into $a$, $b$, and $c$.
\end{proof}

Define $\mu_t(v)$ to be the number of collections of $t-1$ neighbors of $v$ that do not induce a copy of $K_{t-1}$. We will use the above two lemmas to show a relationship between the number of missing edges in a neighborhood of $v$ and $\mu_t(v)$.

\begin{lemma} \label{count}
Let $G$ be a minimum counterexample. For a vertex $v$ in $G$, if the neighborhood $N(v)$ is missing $\mu = \mu(v)$ edges, then $\mu_t(v) \ge \mu \cdot \frac{\binom{r-1}{t-2}}{r-1}$.
\end{lemma}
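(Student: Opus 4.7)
Let $d = d(v)$ and let $x \ge 0$ be the real number with $\binom{x}{2} = \binom{d}{2} - \mu$, i.e., the number of edges in $G[N(v)]$. By Lemma \ref{lem:mindegt}, $\binom{x}{2} > \binom{r-1}{2}$, so $x > r - 1$; combined with $r \ge t$ (Corollary \ref{valueofr}), this also gives $x > t - 1$. Applying Corollary \ref{cor:maxkt} to $G[N(v)]$ yields $k_{t-1}(G[N(v)]) \le \binom{x}{t-1}$ (the case $t = 3$ is trivial since $\mu_3(v) = \mu$ directly). Therefore
\[
\mu_t(v) \;=\; \binom{d}{t-1} - k_{t-1}(G[N(v)]) \;\ge\; \binom{d}{t-1} - \binom{x}{t-1},
\]
and it suffices to prove
\[
\binom{d}{t-1} - \binom{x}{t-1} \;\ge\; \Bigl(\binom{d}{2} - \binom{x}{2}\Bigr)\,\frac{\binom{r-1}{t-2}}{r-1}.
\]

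The case $\mu = 0$ is trivial, so assume $\mu > 0$. Dividing by $\binom{d}{2} - \binom{x}{2} = \mu$, and rewriting $\frac{\binom{r-1}{t-2}}{r-1} = \frac{\binom{r}{t-1} - \binom{r-1}{t-1}}{\binom{r}{2} - \binom{r-1}{2}}$, the desired inequality becomes a comparison of two chord slopes of the convex function $g$ from Lemma \ref{gconv}:
\[
\frac{g\bigl(\tbinom{d}{2}\bigr) - g\bigl(\tbinom{x}{2}\bigr)}{\binom{d}{2} - \binom{x}{2}} \;\ge\; \frac{g\bigl(\tbinom{r}{2}\bigr) - g\bigl(\tbinom{r-1}{2}\bigr)}{\binom{r}{2} - \binom{r-1}{2}}.
\]
By Lemma \ref{lem:cutt}, $d \ge r$, so $\binom{d}{2} \ge \binom{r}{2}$; and $x > r - 1$ gives $\binom{x}{2} > \binom{r-1}{2}$. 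Hence each endpoint of the first interval dominates the corresponding endpoint of the second, and chord-slope monotonicity for convex functions (a direct consequence of the three-slope inequality) yields the inequality.

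The only subtlety is that Corollary \ref{identity}, as stated, applies only when $x \ge r$, whereas $\mu$ may be large enough to force $r - 1 < x < r$. We therefore invoke the two-variable form of chord-slope monotonicity of $g$ rather than the one-sided estimate anchored at $\binom{r-1}{2}$. This handles both ranges of $x$ uniformly and constitutes the main (mild) technical point of the argument.
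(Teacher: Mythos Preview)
Your proof is correct and in fact cleaner than the paper's. The paper splits into two cases according to whether $\mu \le r-1$ or $\mu \ge r$: in the first it invokes the exact Kruskal--Katona bound (Theorem \ref{maxkt}) and finishes with an elementary product inequality; in the second it uses the Lov\'asz form (Corollary \ref{cor:maxkt}) together with an auxiliary variable $x$ satisfying $\binom{d(v)}{2}+\binom{r-1}{2}=\binom{x}{2}+\binom{z}{2}$, then applies convexity of $g$ and finally Corollary \ref{identity} (which requires $x\ge r$, hence the case split). You bypass all of this: after one application of Corollary \ref{cor:maxkt} you reduce directly to a comparison of two chord slopes of $g$, and since $\binom{x}{2}>\binom{r-1}{2}$ (by Lemma \ref{lem:mindegt}) and $\binom{d}{2}\ge\binom{r}{2}$ (by Lemma \ref{lem:cutt}), both endpoints of your interval lie weakly to the right of those of the reference interval, so convexity of $g$ gives the inequality immediately. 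This two-endpoint chord-slope monotonicity is exactly what makes the argument uniform in $\mu$ and removes the need for the paper's case distinction, its auxiliary variable, and its separate use of the sharp Kruskal--Katona bound; the paper's extra machinery buys no additional strength here.
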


\begin{proof}
Let $v\in V(G)$ and $H$ be the subgraph induced by $N(v)$. We will split the proof into two cases.
\smallskip

Case 1: $\mu \le r-1$. By Theorem \ref{maxkt}, the number of copies of $K_{t-1}$ in $H$ is at most as many as a colex graph with $|E(H)|=\binom{d(v)}{2}-\mu=\binom{d(v)-1}{2}+d(v)-1-\mu$ many edges. Note that by Lemma \ref{lem:cutt}, $d(v)-1 \ge r-1\ge \mu$. Then, $\mu_t(v) \ge \binom{d(v)}{t-1} - \left(\binom{d(v)-1}{t-1}+\binom{d(v)-1 - \mu}{t-2}\right) =\binom{d(v)-1}{t-2}-\binom{d(v)-1-\mu}{t-2}$. It follows from Lemma \ref{easy:convex} that $\mu_t(v)\ge \binom{r-1}{t-2} - \binom{r -1- \mu}{t-2}$. Then, the lemma follows immediately if we can prove the following claim. 
$$\binom{r-1}{t-2} - \binom{r-1 - \mu}{t-2} \ge \mu \cdot \frac{\binom{r-1}{t-2}}{r-1}.$$

Rearranging the terms, we can obtain the following equivalent form:

$$(r-1-\mu)\binom{r-1}{t-2}\ge (r-1)\binom{r-1-\mu}{t-2},$$

which is equivalent to

$$(r-2)(r-3)\cdots(r-t+2)\ge (r-2-\mu)(r-3-\mu)\cdots(r-t+2-\mu).$$

The above inequality is clearly true, thus proving the case when $\mu \le r-1$.
\smallskip

Case 2: $\mu \ge r$. Let $z$ be a positive real number such that $|E(H)|=\binom{z}{2}$. From Lemma \ref{lem:mindegt}, we get that $|E(H)| \ge \binom{r-1}{2}$ and hence $z \ge r-1$. An application of Corollary \ref{cor:maxkt} shows that $k_{t-1}(H) \le \binom{z}{t-1}$, hence $\mu_t(v) \ge \binom{d(v)}{t-1} - \binom{z}{t-1}$. Note that $\mu = \binom{d(v)}{2} - \binom{z}{2}$. Find positive $x$ such that $\mu = \binom{d(v)}{2} - \binom{z}{2} = \binom{x}{2} - \binom{r-1}{2}$, i.e. 
\begin{equation} \label{sum}
\binom{d(v)}{2} + \binom{r-1}{2} = \binom{x}{2} + \binom{z}{2}. 
\end{equation} 

Note for a convex function $g : \R \rightarrow \R$ and $a+b=c+d$ with $a \ge \max(c,d)$, we have that $g(a) + g(b) \ge g(c) + g(d)$. Hence, by Lemma \ref{gconv} and Equation \eqref{sum}, we obtain the following:
\begin{equation*} 
\binom{d(v)}{t-1} + \binom{r-1}{t-1} \ge \binom{x}{t-1} + \binom{z}{t-1}.
\end{equation*} 

Then,

$$\mu_t(v) \ge \binom{d(v)}{t-1} - \binom{z}{t-1} \ge \binom{x}{t-1} - \binom{r-1}{t-1} = \mu \cdot \frac{\binom{x}{t-1} - \binom{r-1}{t-1}}{\binom{x}{2} - \binom{r-1}{2}}.$$

Note that $\binom{x}{2}=\binom{r-1}{2}+\mu \ge \binom{r-1}{2}+r-1=\binom{r}{2}$, and thus $x\ge r$. Then, applying Corollary \ref{identity} finishes this proof for Lemma \ref{count}.
\end{proof}

\begin{corollary} \label{bound2}
Let $G$ be a minimum counterexample.Then 
$$\sum_{v \in V(G)} \mu_t(v) > r \cdot \binom{r-1}{t-2}.$$
\end{corollary}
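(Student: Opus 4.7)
The plan is to chain together Lemma~\ref{count}, which lower-bounds $\mu_t(v)$ in terms of the number of missing edges $\mu(v)$ in the neighborhood of $v$, with a global lower bound on $\sum_{v} \mu(v)$ coming from the count of induced $K_{1,2}$'s.

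First I would observe the combinatorial identity $\sum_{v \in V(G)} \mu(v) = \#\{\text{induced copies of } K_{1,2} \text{ in } G\}$: each missing edge $\{u,w\}$ within $N(v)$ corresponds bijectively to the induced $K_{1,2}$ with center $v$ and leaves $u,w$, and conversely every induced $K_{1,2}$ is recorded exactly once, at its center. Next, I would invoke Lemma~\ref{K12} to conclude $\sum_v \mu(v) \ge r^2$. As noted in the Remark following Lemma~\ref{K12}, its proof goes through verbatim for the general $t \ge 3$ case, because the only ingredient specific to $t=3$ was Lemma~\ref{lem:cut}, whose analogue we have already established as Lemma~\ref{lem:cutt}.

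Summing Lemma~\ref{count} over all vertices then yields
\[
\sum_{v \in V(G)} \mu_t(v) \;\ge\; \frac{\binom{r-1}{t-2}}{r-1} \sum_{v \in V(G)} \mu(v) \;\ge\; \frac{r^2}{r-1}\binom{r-1}{t-2}.
\]
Since $\frac{r^2}{r-1} = r + \frac{r}{r-1} > r$, and since $\binom{r-1}{t-2} > 0$ because $r \ge t$ by Corollary~\ref{valueofr}, the right-hand side strictly exceeds $r \cdot \binom{r-1}{t-2}$, giving the desired strict inequality. There is no real obstacle here: the corollary is essentially a one-line consequence of Lemma~\ref{count} and Lemma~\ref{K12}, and the only subtlety is justifying that Lemma~\ref{K12} applies verbatim in the general $t$ regime via Lemma~\ref{lem:cutt}.
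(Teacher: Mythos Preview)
Your proposal is correct and follows essentially the same approach as the paper: both arguments identify $\sum_v \mu(v)$ with the number of induced $K_{1,2}$'s, invoke the $r^2$ lower bound from Lemma~\ref{K12} (justified for general $t$ via Lemma~\ref{lem:cutt}), sum Lemma~\ref{count} over all vertices, and conclude using $\frac{r^2}{r-1} > r$. Your explicit appeal to Corollary~\ref{valueofr} to ensure $\binom{r-1}{t-2} > 0$ is a nice touch that the paper leaves implicit.
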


\begin{proof}
	From Lemma \ref{lem:cutt}, we know that every edge cut in $G$ has size at least $r$. Then, using the exact same proof as Lemma \ref{K12}, we obtain a similar result where $G$ contains at least $r^2$ copies of $K_{1, 2}$. Recall that $\mu(v)$ denotes the number of edges missing in the graph induced by $N(v)$. It follows that $\sum_{v \in V(G)}\mu(v)\ge r^2$. Then, by Lemma \ref{count}, 
	\begin{align*}
		\sum_{v \in V(G)} \mu_t(v) \ge \sum_{v \in V(G)} \mu(v)\frac{\binom{r-1}{t-2}}{r-1} 
		\ge r^2 \cdot \frac{\binom{r-1}{t-2}}{r-1} > r \cdot \binom{r-1}{t-2}.
	\end{align*}
\end{proof}

Finally, we are ready to prove our main theorem. 

\begin{proof} [Proof of Theorem \ref{main}]
	Let $G$ be a minimum counterexample. By the definition of $\mu_t(v)$, the number of copies of $K_t$ containing a vertex $v$ is $\binom{d(v)}{t-1} - \mu_t(v)$. Then, the number of $K_t$'s in $G$ is $\frac{1}{t} \sum_{v \in V(G)} \binom{d(v)}{t-1} - \frac{1}{t} \sum_{v \in V(G)} \mu_t(v)$. We now use Lemma \ref{lem:betterbound} and Corollary \ref{bound2} to upper-bound the first and second sum respectively. Then,
	\begin{align}
		k_t(G) = & \frac{1}{t} \sum_{v \in V(G)} \binom{d(v)}{t-1} - \frac{1}{t} \sum_{v \in V(G)} \mu_t(v) \nonumber \\
		&\le T_t + \binom{r}{t-1} - \binom{s}{t-1} - \frac{2(r-s)}{t} \binom{r}{t-2} - \frac{r}{t} \binom{r-1}{t-2} \nonumber \\
		&= T_t + \frac{1}{t} \binom{r}{t-1} - \binom{s}{t-1} - \frac{2(r-s)}{t} \binom{r}{t-2} \label{ex}.
	\end{align}
	
	Recall that from Corollary \ref{valueofr} we may assume that $r \ge t$. If $s \le t-2$, then from Equation \eqref{ex} we have the following:
	\begin{align*}
	k_t(G) &\le T_t + \frac{r-t+2}{t(t-1)}\binom{r}{t-2}-\frac{2(r-t+2)}{t}\binom{r}{t-2} \\
	&\le T_t + \left(\frac{1}{t-1} - 2\right) \frac{r-t+2}{t(t-1)}\binom{r}{t-2} \\
	&< T_t,
	\end{align*}
	a contradiction.
	
	To achieve a similar contradiction for $s \ge t-2$, for a fixed positive integer $r$, consider the following function on $s$ when $t-2 \le s \le r-1$:
	
	$$h_r(s) = \frac{1}{t} \binom{r}{t-1} - \binom{s}{t-1} - \frac{2(r-s)}{t} \binom{r}{t-2}.$$

	Thus, to finish the proof of Theorem \ref{main}, it suffices to show that $h_r(s) < 0$ for all $t-2 \le s \le r-1$. Let $t-2 \le z\le r-1$ be a value of $s$ where $h_r(s)$ is maximum. Note that one of the following is true, $z = t-2$, $z=r-1$ or $h_r'(z)=0$. We will show that in all three cases, $h_r(z)<0$. 
	\smallskip
	
	Case 1: $z = t-2$. Then 
	
	$$h_r(t-2) \le \frac{r-t+2}{t(t-1)}\binom{r}{t-2}-\frac{2(r-t+2)}{t}\binom{r}{t-2}.$$
	
	Since $\frac{1}{t-1}<2$, $h_r(t-2)<0$. 
	\smallskip
	
	Case 2: $z=r-1$. Then 
	
	$$h_r(r-1) = \frac{1}{t} \left(\binom{r-1}{t-1}+\binom{r-1}{t-2}\right) - \binom{r-1}{t-1} - \frac{2}{t} \binom{r}{t-2}.$$
	
	Comparing the two positive terms with the two negative terms, $h_r(r-1)<0$. 
	\smallskip
	
	Case 3: $h_r'(z)=0$ and $z > t-2$. Then, the derivative of $h_r(s)$ with respect to $s$ is:
	
	$$h'_r(s) = - \frac{(s)_{t-1}}{(t-1)!} \sum_{i=0}^{t-2} \frac{1}{s-i} + \frac{2}{t} \binom{r}{t-2}.$$

	Since $h_r'(z)=0$, we have
	\begin{align*}
		\frac{2}{t} \binom{r}{t-2} & = \frac{(z)_{t-1}}{(t-1)!} \sum_{i=0}^{t-2} \frac{1}{z-i}\\
		& \le \frac{(z)_{t-1}}{(t-1)!}\sum_{i=0}^{t-2}\frac{1}{z-t+2} = \frac{(z)_{t-2}}{(t-2)!}.
	\end{align*}
	
	 Using the above bound, we have
	\begin{align*}
		h_r(z) & = \frac{1}{t} \binom{r}{t-1} - \binom{z}{t-1} - \frac{2(r-z)}{t} \binom{r}{t-2} \\
		&= \frac{r-t+2}{t(t-1)} \binom{r}{t-2} - \frac{z-t+2}{t-1} \cdot \frac{(z)_{t-2}}{(t-2)!} - \frac{2(r-z)}{t} \binom{r}{t-2} \\
		&\le \frac{r-t+2}{t(t-1)}\binom{r}{t-2} - \frac{z-t+2}{t-1} \cdot \frac{2}{t} \binom{r}{t-2} - \frac{2(r-z)}{t} \binom{r}{t-2} \\
		&= \frac{\binom{r}{t-2}}{t(t-1)} \left((r-t+2)-2(z-t+2)-2(r-z)(t-1)\right) \\
		&= \frac{\binom{r}{t-2}}{t(t-1)} \left(-(2t-3)(r-1-z) -z-(t-1)\right) < 0.
	\end{align*}

This concludes the proof of Theorem \ref{main}.

\end{proof}

\section{Concluding remarks}

The celebrated Erd\H{o}s-S\'os conjecture (see, eg., \cite{FS}) asks for the maximum number of edges in a graph with a fixed number of vertices and which does not contain a copy of a fixed tree $T$. A natural generalization is to maximize the number of cliques of a fixed size instead of maximizing the number of edges. Theorem \ref{Z}, for example, is the special case where we maximize the number of $K_t$'s while avoiding the star $K_{1, \Delta}$. In this paper, we answered the variation where the number of edges is fixed instead of the number of vertices. In this edge variant, we can similarly change the structure we avoid to any tree $T$. We believe that similar structures should still be extremal. However, these types of problems might be even more difficult than the vertex version.

It will be interesting to consider the following general version of Theorem \ref{main}. For $0 < s < t$, $m$ and $\D$ positive integers, determine the maximum number of $K_t$'s in a graph with $m$ copies of $K_s$ and whose maximum degree is upper-bounded by $\D$. We finish by remarking that if a reduction from general $t>3$ to the triangle case for Theorem \ref{main} exists, then the proof can potentially be considerably simplified. 

\section{Acknowledgement}
We thank Po-Shen Loh and Stijn Cambie for their careful proofreading and helping us to improve the writing. We are grateful to the anonymous referees for suggesting us ways to increase the quality of this paper.

\end{document}